\newtheorem{theorem}{Theorem}
\newtheorem{corollary}{Corollary}
\newtheorem{proposition}{Proposition}
\newtheorem*{question}{Question}
\theoremstyle{definition}
\newtheorem{remark}{Remark}
\newcommand{\C}{\mathbb{C}}
\newcommand{\Aut}{{\operatorname{Aut}}}
\newcommand{\id}{{\operatorname{id}}}
\newcommand{\rne}{{\operatorname{rne}}}
\newcommand{\tam}{{\operatorname{tam}}}
\begin{document}

\title{Generic conservative dynamics on Stein manifolds \\ with the volume density property}

\author{Leandro Arosio and Finnur L\'arusson}

\address{Dipartimento Di Matematica, Universit\`a di Roma \lq\lq Tor Vergata\rq\rq, Via Della Ricerca Scientifica 1, 00133 Roma, Italy}
\email{arosio@mat.uniroma2.it}

\address{Discipline of Mathematical Sciences, University of Adelaide, Adelaide SA 5005, Australia}
\email{finnur.larusson@adelaide.edu.au}

\subjclass{Primary 32H50.  Secondary 14R10, 32M17, 32Q28, 37F80}

\thanks{L.~Arosio was partially supported  by  PRIN {\sl  Real and Complex Manifolds: Geometry and Holomorphic Dynamics} n.~2022AP8HZ9, by INdAM, and by the   MUR Excellence Department Project MatMod@TOV
CUP:E83C23000330006}

\date{29 July 2025}

\keywords{Dynamics, Stein manifold, volume density property, periodic point, non-wandering point, chain-recurrent point, homoclinic point, chaos, entropy}

\begin{abstract}  We study the dynamics of generic volume-preserving automorphisms $f$ of a Stein manifold $X$ of dimension at least $2$ with the volume density property.  Among such $X$ are all connected linear algebraic groups (except $\C$ and $\C^*$) with a left- or right-invariant Haar form.  We show that a generic $f$ is chaotic and of infinite topological entropy, and that the transverse homoclinic points of each of its saddle periodic points are dense in $X$.  We present analogous results with similar proofs in the non-conservative case.  We also prove the Kupka--Smale theorem in the conservative setting.
\end{abstract}

\maketitle

\tableofcontents

\section{Introduction and main results} 
\label{sec:intro}

\noindent
In this paper, we continue our investigations of holomorphic dynamics on highly flexible complex manifolds \cite{AL2019, AL2020, AL2022, AL2025}, built on the groundbreaking work of Forn\ae ss and Sibony \cite{FS1997}.  We focus on the dynamics of generic volume-preserving holomorphic automorphisms of a Stein manifold $X$ of dimension at least $2$ with the volume density property with respect to a holomorphic volume form on $X$.  Among such manifolds are all connected linear algebraic groups (except $\C$ and $\C^*$) with a left- or right-invariant Haar form, such as the prototypical example $\C^n$ with the form $dz_1\wedge\cdots\wedge dz_n$ and $(\C^*)^n$ with the form $(z_1\cdots z_n)^{-1}dz_1\wedge\cdots\wedge dz_n$, $n\geq 2$.  

A complex manifold $X$ with a holomorphic volume form $\omega$ has the volume density property if, in the Lie algebra of holomorphic vector fields on $X$ that are divergence-free with respect to $\omega$, the complete vector fields generate a dense Lie subalgebra.  If $X$ is Stein and $\dim X\geq 2$, this implies that the volume-preserving automorphisms of $X$ exhibit flexible behaviour that is studied in Anders\'en--Lempert theory (see \cite[Chapter 4]{Forstneric2017} and \cite{FK2022}).  Such manifolds are thus in a sense opposite to Kobayashi hyperbolic manifolds. Given that Kobayashi hyperbolicity prevents any form of chaotic dynamics, it is natural to ask to what extent chaotic dynamics is possible on $X$.  Recall that an endomorphism is \emph{chaotic} in the sense of Devaney \cite{Devaney1989} if its periodic points are dense and it has a dense forward orbit.  The main question studied in our previous work \cite{AL2019, AL2020} is the following.

\begin{question}  Let $X$ be a Stein manifold of dimension at least $2$ with the volume density property with respect to a holomorphic volume form $\omega$.  Are generic volume-preserving automorphisms of $X$ chaotic?
\end{question}

Genericity is with respect to the compact-open topology, separable and defined by a complete metric, on the group $\Aut_\omega X$ of automorphisms of $X$ that preserve $\omega$. 

It follows from the work of Forn\ae ss and Sibony \cite{FS1997} that the answer to the question is affirmative if $X$ is $\C^n$, $n\geq 2$, with the standard volume form $dz_1\wedge\cdots\wedge dz_n$.  In \cite{AL2019, AL2020}, we gave affirmative answers under cohomological assumptions on the volume form.  In this paper, we give a conclusive affirmative answer to the question.

\begin{theorem}\label{main1}
A generic volume-preserving automorphism of a Stein manifold of dimension at least $2$ with the volume density property is chaotic.
\end{theorem}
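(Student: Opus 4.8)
The plan is a Baire category argument in the Polish group $G=\Aut_\omega X$. Chaos in the stated sense is a countable intersection of conditions: fixing a countable basis $\mathcal U$ of $X$, a map $f\in G$ has a dense forward orbit once $f^n(U)\cap V\neq\varnothing$ for some $n\geq0$, for every $U,V\in\mathcal U$ (on the second-countable, perfect, Baire space $X$ this suffices, by the Birkhoff transitivity theorem), and $f$ has dense periodic points once it has a periodic point in every $U\in\mathcal U$. So I would set $T_{U,V}=\{f\in G: f^n(U)\cap V\neq\varnothing\text{ for some }n\geq0\}$ and $P_U=\{f\in G: f\text{ has a periodic point }q\in U\text{ with }Df^k_q-\id\text{ invertible},\ k=\text{the period}\}$. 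The first observation is that each $T_{U,V}$ is open, being $\bigcup_n\{f:f^n(U)\cap V\neq\varnothing\}$, and each $P_U$ is open, since a nondegenerate periodic point persists under small perturbations of $f$ by the implicit function theorem applied to $(g,x)\mapsto g^k(x)-x$. Hence, once I show that every $T_{U,V}$ and every $P_U$ is dense, Baire's theorem yields a residual set of chaotic maps.

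Density is the substance, and both cases would be handled by the same idea: perturb $f_0$, keeping it $\varepsilon$-close (together with its inverse) on a large compact $K$ on which such closeness controls the complete metric of $G$, by redesigning it along suitable orbit segments so as to force the desired orbit or cycle. For $P_U$, pick $p\in U\subset K$; either the forward $f_0$-orbit of some point of $U$ eventually leaves $K$, or the forward orbit of every point of $U$ stays in $K$. In the latter case $W=\overline{\bigcup_{k\geq0}f_0^k(U)}$ is compact, hence of finite volume, and since $f_0$ is volume-preserving and injective it induces, off a null set, a genuine finite volume-preserving system on $W$; Poincaré recurrence then produces a recurrent point $x\in U$, and a small perturbation of $f_0$ supported near $f_0^{k-1}(x)\in K$ (where $f_0^k(x)$ is close to $x$) closes the orbit of $x$ into a genuine periodic orbit through $U$, which a further generic tweak renders nondegenerate. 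In the former case, the orbit segment $p,f_0(p),\dots,f_0^{k-1}(p)$ with $f_0^{k-1}(p)\notin K$ should be closed up by a volume-preserving holomorphic embedding $\phi$ of a large Runge neighbourhood $\Omega$ of the segment that agrees with $f_0$ on $K$ up to $\varepsilon$, sends $f_0^{k-1}(p)$ back to $p$, and has a prescribed $1$-jet at the orbit points making the derivative cocycle around the loop have no eigenvalue $1$; the only constrained-from-scratch part of $\phi$ is the return leg from $f_0^{k-1}(p)\notin K$ back to $p$, which lies off $K$ and, since $\dim X\geq2$, has room to be embedded. Feeding $\phi$ (after composing with $(f_0|_\Omega)^{-1}$) into the Andersén--Lempert theorem with jet interpolation produces $g\in G$ agreeing with $\phi$ to first order along the orbit, so $g\in P_U$, and $\varepsilon$-close to $f_0$ on $K$. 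Density of $T_{U,V}$ follows the same scheme: route a forward orbit of a point of $U$ out of $K$ and back into $V$, using, when orbits are trapped in $K$, a preliminary volume-preserving perturbation that lets orbits drift out (a tiny outward push turns a complex rotation of $\C^2$ into a map with escaping orbits, and the same device works in general).

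The hard part --- and the reason the earlier work \cite{AL2019,AL2020} needed cohomological hypotheses on $\omega$ --- is the realization step: for the Andersén--Lempert theorem to apply, $\phi\circ(f_0|_\Omega)^{-1}$ must be isotopic to the inclusion $\Omega\hookrightarrow X$ through volume-preserving holomorphic embeddings, and there is a flux (period) obstruction to such an isotopy, measured by a de Rham class of a primitive of the relevant form. I expect the core of the proof to be the construction of the return leg, and of the connecting isotopy, so that this period vanishes: choosing the route and reparametrizing the isotopy so that the accumulated flux cancels, which is possible precisely because the volume density property of $X$ makes enough complete divergence-free vector fields available to absorb it. Once this is packaged as a clean perturbation (parametric Andersén--Lempert) lemma, it plugs into the density arguments above, and intersecting the residual sets $\bigcap_{U,V}T_{U,V}$ and $\bigcap_U P_U$ finishes the proof.
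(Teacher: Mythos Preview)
Your Baire-category scaffolding is fine, and your openness arguments are correct; the decomposition into transitivity sets $T_{U,V}$ and periodic-point sets $P_U$ differs only cosmetically from the paper's use of Touhey's criterion (a single family $S(n,m)$ of automorphisms admitting a saddle cycle through both $U_n$ and $U_m$).  The substantive discrepancy is in the density step, and there your plan has a genuine gap.

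You correctly identify the period (flux) obstruction in the conservative Anders\'en--Lempert theorem as the crux, but your proposed resolution --- ``choosing the route and reparametrizing the isotopy so that the accumulated flux cancels'' --- does not work.  The obstruction is a de Rham class on the domain of the isotopy; it is not removed by rerouting or reparametrising, and the volume density property does not supply a mechanism to ``absorb'' it.  This is exactly why the earlier papers \cite{AL2019,AL2020} imposed cohomological hypotheses on $\omega$.  Your Poincar\'e-recurrence argument for the trapped case makes matters worse, not better: closing up a recurrent orbit requires a perturbation supported near a point \emph{inside} the large compact $K$, precisely where the flux problem bites.  Likewise, your ``tiny outward push'' to untrap orbits is itself an Anders\'en--Lempert perturbation on a domain with nontrivial topology, and you have not explained how to realise it.

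The paper's route is entirely different and does not attempt to cancel flux.  It first proves, as a separate proposition, that a generic $f\in\Aut_\omega X$ is \emph{expelling}: the set of points with unbounded forward orbit is dense.  This is done by contradiction, with no perturbation at all: if expelling fails on an open set of automorphisms, the arguments of \cite{AL2019,AL2020} produce a relatively compact Stein $f$-invariant domain $\Omega_0$ carrying an effective action of a real $n$-torus with a totally real principal orbit and no periodic points; the theorem of Barrett, Bedford, and Dadok \cite{BBD1989} then forces $\Omega_0$ to be equivariantly biholomorphic to a Stein Reinhardt domain, which must have a fixed point --- a contradiction.  Once generic expelling is in hand, the density of $S(n,m)$ is obtained by perturbing $f$ only on small coordinate \emph{balls} outside $K$ (where the cohomological hypothesis of the conservative Anders\'en--Lempert theorem is vacuous) while approximating the \emph{identity} on $K$ (where the hypothesis is irrelevant).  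The flux obstruction is thus sidestepped rather than overcome.  Your proposal is missing this idea; the ``preliminary perturbation that lets orbits drift out'' that you wave at is in fact the content of Proposition~\ref{p:expelling}, and it is proved by the Barrett--Bedford--Dadok structure theorem, not by an isotopy construction.
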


Following Forn\ae ss and Sibony's strategy, we prove this theorem as a corollary of the following result.  As in \cite{AL2019}, we call an automorphism \emph{expelling} if the $G_\delta$ set of points with unbounded forward orbit is dense.

\begin{proposition}\label{p:expelling}
A generic volume-preserving automorphism of a Stein manifold of dimension at least $2$ with the volume density property is expelling.
\end{proposition}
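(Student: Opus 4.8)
The plan is to show that $\mathcal E$, the set of expelling automorphisms, contains a dense $G_\delta$ subset of $\Aut_\omega X$ — hence is residual — by a standard Baire-category scheme together with a perturbation argument of Anders\'en--Lempert type. Fix a compact exhaustion $K_1\subseteq K_2\subseteq\cdots$ of $X$, a countable basis $(B_i)_{i\in\mathbb N}$ of nonempty open subsets of $X$, and a Riemannian metric on $X$ with distance function $d$. For $f\in\Aut_\omega X$ and $j\in\mathbb N$ set
\[
U_j(f)=\{x\in X:\ f^n(x)\notin K_j\ \text{for some}\ n\ge 0\}=\bigcup_{n\ge 0}f^{-n}(X\setminus K_j),
\]
an open set. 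A point $x$ has unbounded forward $f$-orbit if and only if $\{f^n(x):n\ge 0\}$ lies in no $K_j$, equivalently $x\in\bigcap_j U_j(f)$; so the set of such points is the $G_\delta$ set $\bigcap_j U_j(f)$, and since $X$ is a Baire space it is dense — that is, $f$ is expelling — as soon as every $U_j(f)$ is dense. As $U_j(f)$ is dense precisely when it meets every $B_i$, we obtain $\mathcal E\supseteq\bigcap_{i,j}\mathcal D_{i,j}$, where
\[
\mathcal D_{i,j}:=\{f\in\Aut_\omega X:\ f^n(x)\notin K_j\ \text{for some}\ x\in B_i\ \text{and some}\ n\ge 0\}.
\]
Since $\Aut_\omega X$ is a Baire space, it thus suffices to show that each $\mathcal D_{i,j}$ is open and dense; then $\bigcap_{i,j}\mathcal D_{i,j}$ is a dense $G_\delta$ and $\mathcal E$ is residual. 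Openness is routine: if $x_0\in B_i$ and $f_0^n(x_0)\notin K_j$, then $g^n(x_0)$ stays close to $f_0^n(x_0)$, hence outside the closed set $K_j$, for all $g$ sufficiently close to $f_0$ uniformly on a compact neighbourhood of $\{x_0,f_0(x_0),\dots,f_0^{n-1}(x_0)\}$.

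The substance is the density of $\mathcal D_{i,j}$, which I would prove as follows. Fix $f_0\in\Aut_\omega X$ and a neighbourhood $\mathcal N$ of $f_0$. Since right translation $h\mapsto h\circ f_0$ is a homeomorphism of the topological group $\Aut_\omega X$ carrying $\id$ to $f_0$, there are a compact set $C_0\subseteq X$ and an $\varepsilon>0$ with $h\circ f_0\in\mathcal N$ for every $h\in\Aut_\omega X$ such that $\sup_{C_0}d(h,\id)<\varepsilon$ and $\sup_{C_0}d(h^{-1},\id)<\varepsilon$. Fix a point $a\in B_i$ and let $R(a)$ be the set of endpoints of finite $\varepsilon$-pseudo-orbits of $f_0$ issuing from $a$, i.e.\ the set of points $p_m$ admitting a sequence $a=p_0,p_1,\dots,p_m$ with $d(p_{k+1},f_0(p_k))<\varepsilon$ for $0\le k<m$. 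Whenever $y\in R(a)$, the whole open $\varepsilon$-ball about $f_0(y)$ lies in $R(a)$ (append $z$ to the pseudo-orbit); consequently the closure $S:=\overline{R(a)}$ contains the open set $\{z\in X:d(z,f_0(S))<\varepsilon\}$. Suppose $S$ were compact. As $X$ is connected and noncompact and $a\in S$, the compact set $f_0(S)$ is a nonempty proper subset of $X$, so it has nonempty boundary and its $\varepsilon$-neighbourhood has strictly larger volume (throughout, $\operatorname{vol}$ denotes the measure attached to the volume form $\omega$). Since $f_0$ preserves $\omega$, hence $\operatorname{vol}$, we have $\operatorname{vol}(f_0(S))=\operatorname{vol}(S)$, so
\[
\operatorname{vol}(S)\ \ge\ \operatorname{vol}\bigl(\{z:d(z,f_0(S))<\varepsilon\}\bigr)\ >\ \operatorname{vol}(f_0(S))\ =\ \operatorname{vol}(S),
\]
which is absurd. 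Hence $S$ is noncompact, so $R(a)\not\subseteq K_j$, and there is a finite $\varepsilon$-pseudo-orbit $a=p_0,p_1,\dots,p_M$ of $f_0$ with $p_M\notin K_j$.

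It remains to promote this pseudo-orbit to a genuine orbit of an automorphism in $\mathcal N$. The conditions $d(p_{k+1},f_0(p_k))<\varepsilon$ and $p_M\notin K_j$ being open, after a slight perturbation of $p_1,\dots,p_M$ we may assume in addition that the $2M$ points $f_0(p_0),\dots,f_0(p_{M-1}),p_1,\dots,p_M$ are pairwise distinct. Pick a compact $\mathcal O(X)$-convex $C\subseteq X$ containing $C_0$ and $f_0(p_0),\dots,f_0(p_{M-1})$ (possible since $X$ is Stein). Because $X$ is Stein of dimension at least $2$ with the volume density property, Anders\'en--Lempert theory (see \cite[Chapter 4]{Forstneric2017} and its applications in \cite{AL2019,AL2020}) furnishes $h\in\Aut_\omega X$ with $\sup_C d(h,\id)<\varepsilon$, $\sup_C d(h^{-1},\id)<\varepsilon$, and $h(f_0(p_k))=p_{k+1}$ for $k=0,\dots,M-1$: one builds a volume-preserving isotopy of a neighbourhood of $C$ from the inclusion to a map carrying each $f_0(p_k)$ to $p_{k+1}$ and $\varepsilon$-close to the inclusion on $C$ — supported near pairwise disjoint arcs of length below $\varepsilon$ joining $f_0(p_k)$ to $p_{k+1}$, which exist since $d(f_0(p_k),p_{k+1})<\varepsilon$ and $\dim_{\mathbb R}X\ge 4$ — and then approximates its time-one map by an automorphism in $\Aut_\omega X$, with exact interpolation at the finitely many points $f_0(p_k)$ (approximate interpolation would do as well, since only finitely many iterates of $a$ are involved). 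Put $f:=h\circ f_0$. Then $f\in\mathcal N$ because $\sup_{C_0}d(h,\id)<\varepsilon$ and $\sup_{C_0}d(h^{-1},\id)<\varepsilon$; and $f(p_k)=h(f_0(p_k))=p_{k+1}$ for $0\le k<M$, whence $f^M(a)=p_M\notin K_j$. As $a\in B_i$, this gives $f\in\mathcal N\cap\mathcal D_{i,j}$, proving density.

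I expect the density of $\mathcal D_{i,j}$ to be the crux, and it is where the hypotheses enter essentially. It combines two complementary ingredients. The first is soft, using only that $f_0$ preserves $\operatorname{vol}$ and that $X$ is noncompact: from any point, some $\varepsilon$-pseudo-orbit of $f_0$ must leave every compact set — this is the volume-comparison argument above. The second is the flexibility supplied by the volume density property: an $\varepsilon$-pseudo-orbit can be realised, after an arbitrarily small change of $f_0$, as a true orbit of an automorphism in $\Aut_\omega X$. Finally, Theorem~\ref{main1} follows from Proposition~\ref{p:expelling} by Forn\ae ss and Sibony's argument passing from the expelling property to Devaney chaos.
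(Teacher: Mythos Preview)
Your approach is quite different from the paper's. Rather than perturbing a given $f_0$ directly into $\mathcal D_{i,j}$, the paper argues by contradiction: if generic expulsion fails, the machinery of \cite{AL2019,AL2020} produces an $f$ whose robustly non-expelling set contains a Stein, relatively compact, completely $f$-invariant domain $\Omega$ on which the closed group generated by $f$ is a compact torus acting with a totally real orbit of maximal real dimension and without periodic points; the theorem of Barrett--Bedford--Dadok \cite{BBD1989} then identifies a component of $\Omega$ with a Stein Reinhardt domain, which must have a fixed point --- a contradiction.

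Your volume comparison showing that $\varepsilon$-pseudo-orbits from any point escape every compact set is correct and pleasant. The gap is the Anders\'en--Lempert step. You require $h\in\Aut_\omega X$ with $\sup_C d(h,\id)<\varepsilon$ and $h(q_k)=p_{k+1}$, where $q_k:=f_0(p_k)\in C$ and $d(q_k,p_{k+1})<\varepsilon$; but such an $h$ need not exist. In a coordinate ball, $\lvert h-\id\rvert<\varepsilon$ forces, by Cauchy's estimate, $\lvert D(h-\id)\rvert\le C\varepsilon$ on a slightly smaller ball, hence
\[
\bigl\lvert (h(q_k)-q_k)-(h(q_{k+1})-q_{k+1})\bigr\rvert \le C\varepsilon\,\lvert q_k-q_{k+1}\rvert:
\]
nearby points must be displaced by $h$ in nearly the same way. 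A pseudo-orbit may demand the opposite. If two consecutive $q_k,q_{k+1}$ lie in $C_0$ at distance about $\varepsilon/2$ with $p_{k+1}\approx q_{k+1}$ and $p_{k+2}\approx q_k$ --- a swap-like segment, which nothing in your construction excludes --- then the required displacements differ by about $2\lvert q_k-q_{k+1}\rvert$, contradicting the bound once $\varepsilon<2/C$. Your isotopy ``supported near disjoint arcs'' cannot be holomorphic on a connected neighbourhood of $C$ (identity theorem), and the alternative --- a disconnected Runge domain with the identity on a piece containing $C_0$ and local motions on balls around the arcs --- fails exactly when the arcs meet $C_0$, i.e.\ whenever the pseudo-orbit lingers there. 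Taking a smaller pseudo-orbit step $\varepsilon'\ll\varepsilon$ does not help, since the Cauchy bound scales identically. This rigidity of holomorphic maps close to the identity --- pseudo-orbits cannot in general be shadowed by true orbits of nearby holomorphic automorphisms --- is precisely why the paper resorts to the structural torus-action argument rather than a direct perturbation.
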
 

The proof of Proposition \ref{p:expelling} combines the argument of Forn\ae ss and Sibony, as generalised in \cite{AL2019,AL2020}, with a theorem of Barrett, Bedford, and Dadok \cite{BBD1989} on torus actions. 

Now let $f$ be a generic element of $\Aut_\omega X$.  It follows from Proposition \ref{p:expelling} (as in \cite[Theorem 3]{AL2019}) that  $f$ has a saddle fixed point with a dense stable manifold.  On the other hand, arguing as in the proof of \cite[Theorem 8]{AL2025}, one can show that $f$ has a saddle fixed point with a transverse homoclinic point.  Combining the ideas in the proofs of these two results, we obtain the following stronger theorem.

\begin{theorem}\label{main2}
Let $f$ be a generic volume-preserving automorphism of a Stein manifold $X$ of dimension at least $2$ with the volume density property.  The transverse homoclinic points of each saddle periodic point of $f$ are dense in $X$.
\end{theorem}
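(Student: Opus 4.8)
The plan is to derive the theorem from a Baire category argument whose crux is a single perturbation lemma, proved by fusing the two constructions referred to just before the statement. Since $\Aut_\omega X$ is a Baire space, fix a countable basis $\{U_j\}_{j\ge1}$ of the topology of $X$ and a compact exhaustion $K_1\subseteq K_2\subseteq\cdots$ of $X$. For $j,k,m\ge1$ let $\mathcal G_{j,k,m}$ be the set of $f\in\Aut_\omega X$ such that every saddle periodic point $p$ of $f$ with $f^k(p)=p$ and $p\in K_m$ has a transverse homoclinic point of its orbit in $U_j$. A saddle periodic point of $f$ is fixed by $f^k$ for $k$ its period and lies in some $K_m$, so every $f\in\bigcap_{j,k,m}\mathcal G_{j,k,m}$ satisfies the conclusion of the theorem. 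Let $\mathcal O_{k,m}$ be the open set of those $f$ whose fixed points of $f^k$ in $\overline{K_m}$ are all hyperbolic with none on $\partial K_m$; by the Kupka--Smale theorem in the conservative setting (proved later in the paper), $\bigcap_{k,m}\mathcal O_{k,m}$ is residual, so each $\mathcal O_{k,m}$ is dense. For $f\in\mathcal O_{k,m}$ the fixed points of $f^k$ in $K_m$ are finite in number, each a saddle (as $\det Df^k=1$ there forces the eigenvalues to have product of modulus one and none on the unit circle) with stable and unstable manifolds of complementary positive dimension, and none is destroyed or created by a perturbation staying in $\mathcal O_{k,m}$. Since transverse homoclinic points and hyperbolic periodic points persist under small perturbation, it follows that $\mathcal G_{j,k,m}\cap\mathcal O_{k,m}$ is open in $\Aut_\omega X$; so it suffices to prove it is dense, and then $\bigcap_{j,k,m}(\mathcal G_{j,k,m}\cap\mathcal O_{k,m})$ is residual.

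Density of $\mathcal G_{j,k,m}\cap\mathcal O_{k,m}$ reduces to the following perturbation lemma: \emph{given $f\in\Aut_\omega X$, a saddle periodic point $p$ of $f$, a nonempty open $U\subseteq X$, and $\varepsilon>0$, there is $\tilde f\in\Aut_\omega X$ with $d(\tilde f,f)<\varepsilon$ such that $p$ remains a saddle periodic point of $\tilde f$ of the same period and the orbit of $p$ has a transverse homoclinic point in $U$.} Indeed, given an arbitrary $f$ and a neighbourhood, replace $f$ by a nearby $f_0\in\mathcal O_{k,m}$ and shrink $\varepsilon$ so that the $\varepsilon$-ball about $f_0$ lies in $\mathcal O_{k,m}$ and in the given neighbourhood; then apply the lemma in turn to the finitely many saddle periodic points $p_1,\dots,p_N$ of $f_0$ with $f_0^k(p_i)=p_i$ and $p_i\in K_m$, at the $i$-th step producing a transverse homoclinic point of $p_i$ in $U_j$ by a perturbation small enough — and, as noted below, with support chosen disjointly from the data of the previous steps — that the saddles $p_1,\dots,p_N$ and the transverse homoclinic points of $p_1,\dots,p_{i-1}$ already produced all persist (the latter remaining in $U_j$); the resulting map lies in $\mathcal G_{j,k,m}\cap\mathcal O_{k,m}$.

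To prove the perturbation lemma, write $O=\{p,f(p),\dots,f^{k-1}(p)\}$ with $k$ the period of $p$, and let $W^u$ and $W^s$ be the global unstable and stable manifolds of $O$ — injectively immersed submanifolds of complementary positive dimension. First, following the argument of \cite[Theorem 3]{AL2019}, which is available here by Proposition \ref{p:expelling} and the volume density property through Anders\'en--Lempert approximation, one perturbs $f$ by automorphisms supported away from $O$ and from its local invariant manifolds (so that $p$ stays a saddle periodic point of period $k$ with unchanged local dynamics), arranging that $W^u$ and $W^s$ both meet $U$; in fact they can be made dense, so that near the point where $W^u$ meets $U$ there also runs a sheet of $W^s$. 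It remains to make $W^u$ and $W^s$ cross \emph{transversally} at a point of $U$, and here one follows the argument of \cite[Theorem 8]{AL2025}: one selects a point far out along the unstable manifold, together with a small ball $B$ about it disjoint from $O$ and chosen so that the pertinent orbit segments and pieces of invariant manifolds are controlled, and uses the volume density property to produce $g\in\Aut_\omega X$ that is $\varepsilon$-close to $\id$, supported in $B$, and such that for $\tilde f=g\circ f$ the dragged sheet of the unstable manifold of $\tilde f$, carried forward, meets the appropriate sheet of the stable manifold of $\tilde f$ transversally at a point $b\in U$. Since $B$ misses $O$, the point $p$ is still a saddle periodic point of $\tilde f$ of period $k$, and $b$ is the desired transverse homoclinic point.

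The step I expect to be the main obstacle is precisely this last localization, which is why the theorem requires genuinely combining the two earlier arguments rather than quoting them in sequence: one needs a single perturbation that at once drags a sheet of the unstable manifold into transverse contact with the stable manifold inside $U$ and leaves undisturbed the saddle periodic point, its local invariant manifolds, and — for the inductive step — the finitely many transverse homoclinic points constructed so far. Realizing this calls for a careful choice of the point far out along $W^u$ and of the support $B$, so that $B$ meets none of the orbit segments along which the stable manifold, the other saddles, or the earlier homoclinic points are governed, while still being positioned so that the single kick by $g$ propagates forward to the desired transverse crossing in $U$; this is exactly the kind of delicate bookkeeping carried out in \cite{AL2025}, now to be done compatibly for $p_1,\dots,p_N$ and for all $U_j$ at once. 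The remaining ingredients — the density of the invariant manifolds of a prescribed saddle, and the production of a single transverse homoclinic point — transfer from \cite{AL2019,AL2025} with only notational changes, since $X$ has the volume density property.
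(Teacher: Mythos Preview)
Your overall strategy matches the paper's: a Baire argument reduces the theorem to a perturbation lemma---given $f$, a saddle periodic point $p$, and a nonempty open $U$, find $\tilde f$ arbitrarily close to $f$ with $p$ still a saddle periodic point and a transverse homoclinic point in $U$---and this lemma is exactly the paper's Proposition~\ref{t:obtain-homoclinic}.  The Baire bookkeeping differs only cosmetically (the paper indexes by a period bound $m$ and a mesh parameter $j$, covering $K_m$ by finitely many small open sets, rather than by a countable basis).

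Where your sketch diverges is in the proof of the perturbation lemma, and here there is a real slip.  You propose first making $W^s$ and $W^u$ pass through $U$, then ``selecting a point far out along the unstable manifold'' and perturbing in a ball $B$ there to drag a sheet into transverse position in $U$.  But backward orbits of points on $W^u$ converge to the cycle $O$, so there is no ``far out'' point along $W^u$ in the backward direction at which to act while staying away from $O$; and your phrase ``supported in $B$'' cannot be taken literally, since nontrivial holomorphic automorphisms are never compactly supported.  The paper sidesteps both issues by a more direct construction: use expelling once to find $x\in U$ whose forward \emph{and} backward $g$-orbits both leave a large holomorphically convex set $H$ containing the cycle and its local invariant manifolds; choose $y_s\in W^s$ and $y_u\in W^u$ outside $H$; then apply the conservative Anders\'en--Lempert theorem with \emph{two} moving balls simultaneously (one carrying $g^{m_s}(x)$ to $y_s$, the other carrying $y_u$ to $g^{-m_u}(x)$, with the transversality condition imposed on the derivative of the composite), while approximating the identity on a Runge neighbourhood of $H$.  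This yields in one stroke a transverse homoclinic point near $x$, hence in $U$; a final conjugation by a small perturbation of the identity restores the saddle exactly at $p$.  The orbit segments you worry about protecting in your inductive step are protected simply by enlarging $H$.
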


We move on to an investigation of the topological entropy of a generic volume-preserving automorphism of $X$.  The notion of a H\'enon-like map was introduced by Dujardin in \cite{Dujardin2004}, where the reader may also find background information on entropy.  Using Anders\'en--Lempert theory, by a small perturbation of any given volume-preserving automorphism, we can create a H\'enon-like map of arbitrarily large degree in a small ball outside a large compact set in $X$.  In this way we obtain the following result.

\begin{proposition}\label{main3}
A generic volume-preserving automorphism of a Stein manifold of dimension at least $2$ with the volume density property has infinite topological entropy.
\end{proposition}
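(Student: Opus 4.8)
The plan is to produce, for each positive integer $N$, an open dense subset $U_N\subseteq\Aut_\omega X$ on which the topological entropy exceeds $N$; then $\bigcap_{N\ge 1}U_N$ is a dense $G_\delta$ set all of whose members have infinite topological entropy, which is the assertion of Proposition~\ref{main3}. Take $U_N$ to be the set of $g\in\Aut_\omega X$ that admit a compact \emph{hyperbolic} invariant set $\Lambda\subset X$ on which $g$ is topologically conjugate to the full two-sided shift on $d$ symbols for some integer $d$ with $\log d>N$; such a $g$ satisfies $h_{\mathrm{top}}(g)\ge h_{\mathrm{top}}(g|_\Lambda)=\log d>N$. Openness of $U_N$ is the local structural stability of hyperbolic basic sets: if $g_0\in U_N$ has such a $\Lambda$ and $Q$ is a compact neighbourhood of $\Lambda$, then every $g\in\Aut_\omega X$ that is $C^1$-close to $g_0$ on $Q$ has a hyperbolic invariant set near $\Lambda$ on which it is conjugate to the same shift; and, by the Cauchy estimates, $C^1$-closeness on $Q$ follows from closeness in the compact-open topology on a slightly larger compact set. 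Thus $U_N$ contains a neighbourhood of each of its points.

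The substance of the proof is density of $U_N$. Fix $f\in\Aut_\omega X$, a compact $\mathcal O(X)$-convex set $L\subset X$ (which we may enlarge at will, so that ``close to $f$ on $L$'' becomes as strong a condition as we please), and $\varepsilon>0$; we must find $g\in U_N$ within $\varepsilon$ of $f$ uniformly on $L$. Fix an integer $d$ with $\log d>N$ and build a local model. Pick a point $p\in X\setminus\bigl(L\cup f(L)\cup f^{-1}(L)\bigr)$ and holomorphic coordinates near $p$ in which $\omega=dz_1\wedge\cdots\wedge dz_n$ on a ball $B_0$. On $\C^n$ consider the polynomial automorphism
\[
H(z_1,\dots,z_n)=\bigl(z_2,\ z_2^{d}+c-\delta z_1,\ \lambda z_3,\dots,\lambda z_n\bigr),
\]
with $0<|\lambda|<1$ and $\delta=\lambda^{2-n}$ (for $n=2$ the last block is empty and $\delta=1$), chosen so that $H$ preserves $dz_1\wedge\cdots\wedge dz_n$. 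For $|c|$ sufficiently large, $H$ is in the complex horseshoe regime: inside an explicit polydisk it has a compact hyperbolic invariant set on which it is conjugate to the full two-sided $d$-shift. Conjugating $H$ by a dilation $z\mapsto rz$ with a suitable scaling factor — which multiplies $\omega$ by a constant and hence preserves the property of being volume-preserving — rescales this polydisk, together with the horseshoe it contains, into an arbitrarily small polydisk $P$; for $P$ small enough, $H$ and $H^{-1}$ map a neighbourhood of $\overline P$ into $B_0$, so that we have realised inside a small ball a Hénon-like map of degree $d$ in the sense of \cite{Dujardin2004}, as announced in the introduction. Finally choose a ball $B$ with $\overline P\subset B\subset B_0$ small enough that $\overline B$ is disjoint from $L\cup f(L)\cup f^{-1}(L)$, that $f(\overline B)\cap L=\emptyset$, and that $L\cup\overline B$ is $\mathcal O(X)$-convex; this is possible because $X\setminus L$ is open and nonempty.

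It remains to globalise: to pass to an automorphism of $X$ that is close to $f$ on $L$ and close to the local model $H$ on $\overline B$. This is exactly the kind of localised modification of a given automorphism inside a small ball that is carried out via Anders\'en--Lempert theory in \cite{AL2019,AL2020,AL2025}: using the volume density property of $X$ (see \cite[Chapter~4]{Forstneric2017} and \cite{FK2022}), one obtains $g\in\Aut_\omega X$ that is as close to $f$ as one wishes uniformly on $L$ and as close to $H$ as one wishes uniformly on $\overline B$. Choosing this approximation good enough, $g$ is within $\varepsilon$ of $f$ on $L$ and, by the Cauchy estimates, $C^1$-close to $H$ on $\overline P$; since the entire symbolic dynamics of the horseshoe of $H$ takes place inside $P$, local structural stability gives $g$ a compact hyperbolic invariant set in $P$ conjugate to the $d$-shift. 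Hence $g\in U_N$, density of $U_N$ is proved, and Proposition~\ref{main3} follows.

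I expect the globalisation step to be the main obstacle, on two counts. First, the compact sets, the biholomorphism to be approximated, and its isotopy to the identity must be arranged so that the Anders\'en--Lempert theorem genuinely applies; this is where the arguments of \cite{AL2019,AL2020,AL2025} must be reused with care. Second, and more delicate, the approximation of the local model must be controlled not merely on $P$ but on the whole region the horseshoe dynamics sweeps out, so that the lower bound $\log d$ on the entropy really does survive the passage from $H$ to the automorphism $g$; it is to make this robust that the local model should be built around a genuinely hyperbolic horseshoe — using the contraction $\lambda$ in the remaining coordinates — rather than merely an invariant set carrying a large-entropy factor.
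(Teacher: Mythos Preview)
Your approach is the paper's: for each $d\ge 2$, exhibit an open dense set of volume-preserving automorphisms carrying a H\'enon-like map of degree $d$ inside some small ball, then intersect over $d$. Two minor variants: the paper invokes Dujardin's stability of H\'enon-like maps for openness rather than structural stability of a hyperbolic set, and in dimension $n>2$ it therefore uses the model $(z_1,\dots,z_n)\mapsto(z_2+z_1^d,-z_1,z_3,\dots,z_n)$, with the identity on the extra coordinates, instead of contracting them. Your hyperbolic variant is a legitimate alternative and, as you say, makes robustness explicit.

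You correctly identify the globalisation as the crux, and it is in fact the entirety of the paper's proof. The scheme is to write the sought automorphism as $f\circ\phi$ with $\phi\approx\id$ on the given compact set $U$ and $\phi\approx f^{-1}\circ\tilde h$ on the small ball $V_3$, where $\tilde h=\psi\circ h_a\circ\psi^{-1}$ is the transplanted scaled H\'enon map. The conservative Anders\'en--Lempert theorem then requires an explicit isotopy of volume-preserving injections of $V_3$ joining $f^{-1}\circ\tilde h$ to the inclusion, with images disjoint from $U$ and Runge with $U$ throughout. The paper builds this in two stages. First, an isotopy $(\Psi_t\circ\tilde h)_{t\in[0,1]}$ drags $f^{-1}(\tilde h(V_3))$ back into the coordinate ball $V_1$ while sending the centre to the centre; this is where the placement of the ball away from $U$ and $f(U)$ is used. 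Second, inside the ball, the rescaling trick $\Phi_t(z)=k(tz)/t$ deforms the resulting self-map $k$ (which now fixes the origin) to its derivative at $0$, a volume-preserving linear map, which is then linearly isotoped to the identity inside $\mathrm{SL}(n,\C)$. Nothing in your outline is wrong, but this two-step isotopy is precisely the content you have deferred; your citation of \cite{AL2019,AL2020,AL2025} is in the right spirit, though the actual construction is specific to this paper.
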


Next, building on work of Buzzard \cite{Buzzard1998}, we complete our description of the generic conservative dynamics on a Stein manifold with the volume density property by adapting the Kupka--Smale theorem to this setting.

\begin{theorem}\label{main4}
For a generic volume-preserving automorphism of a Stein manifold of dimension at least $2$ with the volume density property, every periodic point is hyperbolic (and thus a saddle) and every homoclinic or heteroclinic point is transverse.  
\end{theorem}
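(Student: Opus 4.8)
The plan is to express the property in question as a countable intersection of open dense subsets of the Polish group $\Aut_\omega X$ (a Baire space, being separable and completely metrizable as noted in the introduction) and thereby reduce the theorem to two genericity statements: one about hyperbolicity of periodic points, one about transversality of invariant manifolds. Fix the volume form $\omega$ and a compact exhaustion $K_1\Subset K_2\Subset\cdots$ of $X$. For integers $N,j\geq 1$, let $\mathcal{P}_{N,j}$ be the set of $f\in\Aut_\omega X$ such that (i) every periodic point of $f$ of period at most $N$ lying in $K_j$ is hyperbolic, and (ii) for every pair $p,q$ of such periodic points, not necessarily distinct, the stable manifold $W^s(p)$ and the unstable manifold $W^u(q)$ intersect transversely at every point of $K_j$. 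If $f$ preserves $\omega$, then $\det Df^n(p)=1$ at every periodic point $p$ of period dividing $n$ (the determinant telescopes along the orbit), so a hyperbolic periodic point automatically has eigenvalues both inside and outside the unit circle, hence is a saddle. Every periodic point lies in some $K_j$ and has some period $N$, and every homoclinic or heteroclinic point lies in some $K_j$ and arises from periodic points of bounded period; therefore $\bigcap_{N,j}\mathcal{P}_{N,j}$ is exactly the set described in the theorem, and it suffices to prove that each $\mathcal{P}_{N,j}$ is open and dense.

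First I would treat condition (i). A hyperbolic periodic point of period $n$ is a nondegenerate fixed point of $f^n$, so it persists and remains hyperbolic under small perturbations, while on the compact set $K_j$ no new periodic points of period at most $N$ can appear; hence $\{f:\text{(i) holds}\}$ is open. For density I would perturb in two stages, using only volume-preserving automorphisms produced by Anders\'en--Lempert theory, which is applicable precisely because $X$ has the volume density property. Replacing $f$ by $\varphi\circ f$ with $\varphi\in\Aut_\omega X$ close to $\id$ and drawn from a suitable finite-dimensional family, one first makes the graph of $f^n$ transverse to the diagonal over $K_j$ for every $n\leq N$; by the parametric transversality theorem this holds outside a closed null subset of the family, and the only point to check is that the relevant evaluation map is submersive, which follows from the flexibility of $\Aut_\omega X$ granted by the volume density property (complete divergence-free vector fields take arbitrary values at any finite set of points). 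After this stage the periodic points of period at most $N$ in $K_j$ are isolated, hence finitely many; near each such orbit one then composes $f$ with a volume-preserving automorphism, close to $\id$ and supported up to a small error near the orbit, whose effect is to multiply $Df^n(p)$ on the left by an arbitrary element of $\operatorname{SL}(T_pX)$ close to the identity. Since the matrices in a given coset of $\operatorname{SL}$ having an eigenvalue of modulus one form a set of real codimension one, a suitable choice makes $Df^n(p)$ hyperbolic.

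Next I would establish condition (ii), assuming $f$ already satisfies (i) on a slightly larger compact set, which by the previous paragraph holds after an arbitrarily small perturbation. By the holomorphic stable and unstable manifold theorems, each of the finitely many relevant periodic points $p$ has holomorphic local invariant manifolds, and $W^s(p)=\bigcup_{m\geq 0}f^{-m}(\overline{D})$ for a suitable compact piece $\overline{D}$ of $W^s_{\mathrm{loc}}(p)$, and likewise for $W^u$. For fixed $m_0,m_0'$ and fixed compact pieces, the condition that $f^{-m_0}(\overline{D})$ and $f^{m_0'}(\overline{D'})$ meet transversely inside $K_j$ is open, by compactness and continuous dependence of all the data on $f$, and $\mathcal{P}_{N,j}$ is the intersection of (i) with countably many such conditions (over pairs of periodic points and countably many choices of pieces and of $m_0,m_0'$), so $\mathcal{P}_{N,j}$ is $G_\delta$; together with openness of (i) and the density argument below, this gives that $\mathcal{P}_{N,j}$ is open and dense, the pieces being taken generously enough that every homoclinic or heteroclinic point in $K_j$ lies in the interior of the corresponding pieces. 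For the density of one such transversality condition I would argue as Buzzard does in \cite{Buzzard1998}: choose a finite-dimensional family of perturbations $f_t=\psi_t\circ f$ with $\psi_0=\id$ and $\psi_t\in\Aut_\omega X$ close to $\id$, localized up to a small error near a point $w$ of a forward iterate of $W^u_{\mathrm{loc}}(q)$ preceding the piece $f^{m_0'}(\overline{D'})$, chosen, as in \cite{Buzzard1998}, to avoid the periodic orbits, the local invariant manifolds, and the piece $f^{-m_0}(\overline{D})$. Then $p$, $q$, the pieces of $W^s(p)$ in question, and the early iterates of $\overline{D'}$ are essentially fixed, while the displacement of $f_t^{m_0'}(\overline{D'})$ near $K_j$ is, to first order in $t$, the image under an invertible differential of the value at $w$ of the generating divergence-free field; since such fields realize all of $T_wX$ (the divergence-free constraint is a single scalar equation, harmless as $\dim_{\mathbb{C}}X\geq 2$, and the complete ones have values at $w$ spanning $T_wX$ by the volume density property), the associated evaluation map is submersive, and the parametric transversality theorem yields transversality in $K_j$ for all $t$ outside a closed null set. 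Doing this for each of the finitely many pairs arising at stage $N,j$, and keeping all perturbations small enough to preserve the open condition (i), gives the required density.

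The step I expect to be the main obstacle is the density part of condition (ii): producing, inside the volume-preserving automorphism group rather than the full diffeomorphism group, perturbations that are simultaneously rich enough to make the relevant evaluation maps submersive and controlled enough to leave the periodic orbits and the pertinent local invariant manifolds essentially fixed, so that a genuine parametric transversality argument applies with a well-understood target piece. This is exactly the point at which the volume density property, via Anders\'en--Lempert theory, and the perturbation scheme of \cite{Buzzard1998} are indispensable; by comparison, verifying that the divergence-free constraint does not obstruct transversality, handling the boundaries of the compact pieces in the $G_\delta$ description, and keeping each successive perturbation within the finitely many open conditions already arranged, are routine.
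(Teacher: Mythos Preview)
Your proposal is correct and follows essentially the same route as the paper: a Baire-category reduction to countably many open (or $G_\delta$) dense conditions, hyperbolicity handled by graph--diagonal transversality plus an eigenvalue perturbation in $\operatorname{SL}$, and transversality of invariant manifolds via Buzzard's scheme adapted to the conservative setting through the parametric Anders\'en--Lempert theorem. The paper's only extra specificity is the concrete construction of the localized perturbation family---translations in a volume-preserving coordinate ball around the homoclinic point obtained via Moser's trick, kept equal to the identity on balls around the rest of the orbit closure, and then approximated by families in $\Aut_\omega X$---which is exactly what your phrase ``localized up to a small error near a point $w$'' is pointing toward.
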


As far as we know, Theorems \ref{main2} and \ref{main4} and Proposition \ref{main3} are new even for $\C^n$, $n\geq 2$, with the standard volume form.

In the final section of the paper, we outline analogues with similar proofs of some of the above results in the non-conservative case.

\section{Proof of  Proposition \ref{p:expelling} and Theorem \ref{main1}} 
\label{sec:proof1}

\begin{proof}[Proof of Proposition \ref{p:expelling}]
Let $X$ be a Stein manifold, $n=\dim X\geq 2$, with the volume density property with respect to a holomorphic volume form $\omega$.  We wish to prove our theorem \cite[Theorem 5]{AL2019} without any cohomological assumption.  The proof of the theorem shows:  Suppose that it is not the case that a generic volume-preserving automorphism of $X$ is expelling.  Then there is $f\in\Aut_\omega X$ such that the following holds.  There is a completely $f$-invariant relatively compact nonempty open set $\Omega$ in the robustly non-expelling set $\rne(f)$ of $f$ (not necessarily all of $\rne(f)$) with finitely many connected components, such that the closed subgroup $G$ of $\Aut\,\Omega$ generated by $f$ is a compact abelian Lie group (hence the product of a torus and a finite group), the orbits of $G$ in $\Omega$ are totally real, and among them is an orbit of maximal real dimension $n$.  Moreover, there are no $f$-periodic points in $\Omega$.  As explained in \cite[Footnote 3]{AL2020}, $\Omega$ may be taken to be Stein.

Choose a connected component $\Omega_0$ of $\Omega$.  It is Stein and invariant under the action of the identity component $G_0$ of $G$.  Being a subgroup of $\Aut\,\Omega$, $G$ acts effectively on $\Omega$.  Each $g\in G_0$ is the limit of a sequence of iterates of $f$.  The restriction of $g$ to any connected component of $\Omega$ may be conjugated by an iterate of $f$ to its restriction to $\Omega_0$.  Hence, if $g$ is the identity on $\Omega_0$, then $g$ is the identity on all of $\Omega$.  Thus, $G_0$ acts effectively on $\Omega_0$.  Smoothness of the action is guaranteed by the Bochner--Montgomery theorem.

The torus $G_0$ is abelian and acts effectively on $\Omega_0$, so the stabiliser of a generic point in $\Omega_0$ is trivial (see \cite[Theorem IV.3.1]{Bredon1972}).  Since there is an $n$-dimensional orbit, $\dim G_0=n$.  By \cite[Theorem 1]{BBD1989}, $\Omega_0$ with the $G_0$-action is equivariantly biholomorphic to a Reinhardt domain in $\C^n$ with the standard torus action, possibly modified by an algebraic automorphism of the torus.  But a Stein Reinhardt domain has a fixed point, which in the present context would give an $f$-periodic point in $\Omega_0$ since the iterates of $f$ are dense in $G_0$.
\end{proof}

\begin{remark}
Since the intersection of two residual subsets is residual, it immediately follows that for a generic automorphism, the set of points which have unbounded forward and backward orbits is residual.
\end{remark}

\begin{proof}[Proof of Theorem \ref{main1}]
We verify Touhey's characterisation of chaos \cite{Touhey1997}:  for every pair of nonempty open subsets $U$ and $V$ of $X$, there is a cycle that visits both of them.  To prove this, let $(U_n)_{n\in \mathbb{N}}$ be a countable basis of nonempty open sets of $X$, and for all $n, m\in \mathbb{N}$, define $S(n,m)\subset \Aut_\omega X$  as the set of automorphisms that admit a saddle cycle intersecting $U_n$ and $U_m$.  Since saddles are stable under perturbation, the sets $S(n,m)$ are open.  Using Proposition \ref{p:expelling} and Anders\'en--Lempert theory, we show that the sets $S(n,m)$ are dense; then the proof is complete by Baire.  Since the density proof is almost identical to the argument in \cite[Section 5]{AL2019}, we only sketch it.  By Proposition \ref{p:expelling}, for generic $f\in\Aut_\omega X$, there are points $x\in U_n$ and $y\in U_m$ whose forward and backward orbits both leave $f(K)$, where $K$ is a given large compact holomorphically convex set.  We use Anders\'en--Lempert theory to construct $g\in\Aut_\omega X$ close to the identity on $f(K)$, connecting chosen points of the orbits of $x$ and $y$ outside $f(K)$.  Then $g\circ f$ is close to $f$ on $K$ and has a cycle visiting both $U_n$ and $U_m$.  It is easy to avoid eigenvalues of absolute value $1$ in the relevant derivatives, so the cycle will be a saddle cycle.
\end{proof}

\begin{corollary}  
For a  generic volume-preserving automorphism of a Stein manifold $X$ of dimension at least $2$ with the volume density property the following hold.
\begin{enumerate}
\item  All points in $X$ are non-wandering.
\item  $X$ is a single chain-recurrence class.
\item  Saddle periodic points of $f$ are dense in $X$.
\end{enumerate}
\end{corollary}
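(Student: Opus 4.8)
The plan is to harvest all three statements from Theorem \ref{main1} and, more precisely, from the residual set $\mathcal{G}=\bigcap_{n,m\in\mathbb{N}}S(n,m)$ built in its proof. Fix $f\in\mathcal{G}$; I will deduce (3) directly, then (1) from (3), and (2) from the topological transitivity of $f$, invoking only the standard facts that the non-wandering set of a homeomorphism is closed and that topological transitivity forces chain transitivity.

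For (3): the proof of Theorem \ref{main1} shows that each $S(n,m)$ is open and dense, so $\mathcal{G}\subseteq S(n,n)$ for every $n$. Thus $f$ admits, for each $n$, a saddle cycle meeting $U_n$. Since $(U_n)_{n\in\mathbb{N}}$ is a basis of $X$, every nonempty open subset of $X$ contains a point of a saddle cycle of $f$; this is exactly the assertion that the saddle periodic points of $f$ are dense.

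For (1): given $x\in X$ and a neighbourhood $U$ of $x$, choose by (3) a periodic point $p\in U$, of some period $k\ge 1$. Then $p\in f^{k}(U)\cap U$, so $x$ is non-wandering. (Equivalently: the non-wandering set $\Omega(f)$ is closed and contains the set of periodic points, which is dense by (3), whence $\Omega(f)=X$.) For (2): Touhey's condition, established for $f$ in the proof of Theorem \ref{main1}, yields in particular that for every pair of nonempty open sets $U,V$ there is $k\ge 1$ with $f^{k}(U)\cap V\neq\emptyset$; that is, $f$ is topologically transitive. Now fix $x,y\in X$ and $\varepsilon>0$ and apply this to the nonempty open sets $B(f(x),\varepsilon)$ and $f^{-1}(B(y,\varepsilon))$ (the latter containing $f^{-1}(y)$): there are $q\in B(f(x),\varepsilon)$ and $k\ge 1$ with $f^{k+1}(q)\in B(y,\varepsilon)$. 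Then $x,q,f(q),\ldots,f^{k}(q),y$ is an $\varepsilon$-chain from $x$ to $y$, its first step having length $d(f(x),q)<\varepsilon$, its middle steps length $0$, and its last step length $d(f^{k+1}(q),y)<\varepsilon$. Taking $x=y$ shows that every point of $X$ is chain-recurrent, and running the construction for both $(x,y)$ and $(y,x)$ shows that $X$ is a single chain-recurrence class.

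Honestly there is no serious obstacle: once Theorem \ref{main1} is available the corollary is a matter of bookkeeping. The two places that deserve a line of care are that the cycles furnished by the proof of Theorem \ref{main1} really are \emph{saddle} cycles --- which that proof ensures by keeping the relevant derivative eigenvalues off the unit circle --- and the two standard structural inputs cited above (closedness of $\Omega(f)$, and topological transitivity $\Rightarrow$ chain transitivity via the short pseudo-orbit construction), neither of which requires compactness or any further hypothesis on $X$.
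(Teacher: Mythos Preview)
Your proof is correct and follows essentially the same path as the paper's: (3) is read off from the saddle cycles produced in the proof of Theorem~\ref{main1}, and (1) is the standard consequence of density of periodic points. For (2) you derive chain transitivity from topological transitivity via an explicit $\varepsilon$-chain, whereas the paper invokes the existence of a dense forward orbit (the other half of Devaney chaos) to the same end; both are routine and amount to the same thing.
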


\begin{proof}
The generic volume-preserving automorphism $f$ being chaotic means, first, that periodic points of $f$ are dense in $X$, which implies that all points in $X$ are non-wandering, and second, that $f$ has a dense forward orbit, which implies that every two points lie in the same chain-recurrence class.  Density of saddle periodic points was shown in the proof of Theorem \ref{main1}.
\end{proof}

\section{Proof of Theorem \ref{main2}} 
\label{sec:proof2}

\noindent
Our key new technical result is the following proposition.

\begin{proposition}\label{t:obtain-homoclinic}
Let $X$ be a Stein manifold of dimension at least 2 with the volume density property with respect to a holomorphic volume form $\omega$.  Let $p$ be a saddle periodic point of $f\in\Aut_\omega X$.  Let $U$ be a nonempty open subset of $X$.  Then every neighbourhood of $f$ in $\Aut_\omega X$ contains an automorphism with $p$ as a saddle periodic point with a transverse homoclinic point in $U$.
\end{proposition}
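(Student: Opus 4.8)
The plan is to perturb $f$ near a heteroclinic-type chain of points that connects the unstable manifold $W^u(p)$ back to the stable manifold $W^s(p)$, forcing a transverse intersection inside $U$. I would work with $g=f^m$ where $m$ is the period of $p$, so that $p$ becomes a saddle fixed point; it suffices to produce the perturbation as a small perturbation of $g$ supported away from the orbit of $p$ and then note that this corresponds to a small perturbation of $f$ itself (composing $f$ with an automorphism close to the identity and supported near a single fundamental-domain piece of the relevant orbit segment). Since saddles are stable under perturbation, $p$ will remain a saddle fixed point of the perturbed map with local stable and unstable manifolds varying continuously.

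First I would fix a large holomorphically convex compact set $K$ containing $p$, a piece of its local unstable manifold $W^u_{\mathrm{loc}}(p)$, and a piece of its local stable manifold $W^s_{\mathrm{loc}}(p)$, chosen large enough that forward iteration of the unstable piece and backward iteration of the stable piece are controlled. Next, pick a point $a$ on $W^u_{\mathrm{loc}}(p)$ distinct from $p$ and a point $b$ on $W^s_{\mathrm{loc}}(p)$ distinct from $p$, and choose $N$ large so that $g^N(a)$ and $g^{-N}(b)$ are well outside $K$ (this uses that points on the unstable manifold other than $p$ have forward orbit escaping any given compact set, at least after the isotopy argument below — more robustly, one first slightly perturbs to arrange that a chosen unstable-manifold point's forward orbit leaves $K$, exactly as in the expelling arguments of Section 2 and \cite{AL2019}). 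Also choose an auxiliary point $c\in U$. The idea is to use Anders\'en--Lempert theory to build an automorphism $h\in\Aut_\omega X$ that is close to the identity on $K$, that maps a forward iterate $g^{N}(a)$ to (a backward iterate near) $c$, and maps $c$ forward to $g^{-N}(b)$, threading the orbit of $a$ through $U$ and into $W^s(p)$. Because all these points and their relevant finite orbit segments lie outside $K$ (and can be arranged to lie in a polynomially convex configuration with tame, unknotted connecting arcs), the Anders\'en--Lempert / Forstneri\v{c}--Rosay type theorem for volume-preserving automorphisms gives such an $h$ with $h$ as close to $\id$ on $K$ as we like; then $\tilde g = h\circ g$ (or a suitable conjugate) is as close to $g$ on $K$ as desired, still has $p$ as a saddle fixed point, and now has a point of $U$ lying on both the (global) unstable manifold and the (global) stable manifold of $p$.

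The remaining issue is \emph{transversality} of the resulting homoclinic intersection at the point in $U$: a priori the perturbed unstable and stable manifolds could meet there non-transversely. I would handle this by incorporating an additional local perturbation: near the connecting point $c\in U$, which lies outside $K$ and outside the orbit segments already used, compose with a further volume-preserving automorphism supported in a small ball about $c$ that shears one manifold relative to the other. Concretely, $W^u$ near $c$ is a germ of an $n_u$-dimensional complex submanifold and $W^s$ near $c$ a germ of an $n_s$-dimensional one with $n_u+n_s=n$; by a generic choice of a divergence-free local vector field (flowing for small time) one can tilt the tangent space of the unstable sheet so that it becomes complementary to that of the stable sheet, while keeping the map close to the identity on $K$ and not disturbing $p$ or the escaping orbit segments. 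Parametrizing the space of such local perturbations and noting that transversality is an open dense condition there (a standard jet-transversality / Sard-type argument, or simply an explicit shear in suitable local coordinates supplied by Anders\'en--Lempert), we obtain an automorphism arbitrarily close to $f$ with $p$ a saddle periodic point possessing a \emph{transverse} homoclinic point in $U$.

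The main obstacle I anticipate is the bookkeeping needed to run Anders\'en--Lempert theory while simultaneously (i) keeping the perturbation small on the large set $K$ containing $p$ and the local invariant manifolds, (ii) ensuring the chosen unstable-manifold point actually escapes $K$ under forward iteration of the \emph{perturbed} map — which requires interleaving the expelling argument with the homoclinic construction rather than doing them in sequence — and (iii) arranging that all the points and connecting arcs involved form a configuration to which the volume-preserving isotopy/interpolation theorem applies (polynomial convexity, tameness of the arcs). Getting the transversality perturbation to be genuinely volume-preserving and localized is comparatively routine once local coordinates are set up, but verifying that it does not destroy the escaping-orbit property established in step (ii) needs a little care, since the transversality perturbation is supported in $U$, which may itself be visited by those orbits; choosing $U$-supported perturbations small enough, and choosing the escaping point's orbit to pass through $U$ only at the single controlled moment, resolves this.
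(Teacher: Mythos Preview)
Your overall strategy---use Anders\'en--Lempert theory to thread the unstable manifold through $U$ and into the stable manifold---is the right one, and matches the paper's approach. But there is a genuine gap in how you set up the chain, and it is exactly the obstacle you flag under (ii) without resolving.

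The neighbourhood $V$ of $f$ that you must land in forces closeness to $f$ on an \emph{arbitrarily large} compact $H$, and $U$ may lie entirely inside~$H$. Your plan places the connecting point $c$ in $U$ and then both ``maps $c$ forward to $g^{-N}(b)$'' and, for transversality, perturbs ``near the connecting point $c\in U$, which lies outside $K$''. Neither is permissible in general: you cannot touch $c$ directly, because the perturbation must be $\id$-close on all of $H\supset U$. The paper's resolution is to invoke Proposition~\ref{p:expelling} not for a point on the unstable manifold, but for a point $x\in U$: after an initial arbitrarily small perturbation to a generic $g$, one gets $x\in U$ whose \emph{both} forward and backward $g$-orbits leave $H$. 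Then all the Anders\'en--Lempert surgery happens at the four points $g^{m_s}(x)$, $g^{-m_u}(x)$, $y_s\in W^s\setminus H$, $y_u\in W^u\setminus H$, every one of them outside $H$; the passage through $U$ is accomplished by the \emph{unperturbed} dynamics of $g$ on~$H$. Your worry about making the unstable-manifold point's forward orbit escape is therefore misplaced: one only needs that $W^s$ and $W^u$ are not contained in $H$ (they are biholomorphic to $\C^k$), while the escaping behaviour is required of $x\in U$ and comes from the expelling proposition.

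Two smaller points. First, the paper does not add a separate transversality perturbation near $c$: instead, the derivative condition is built directly into the isotopies $\varphi_1,\psi_1$ acting near $g^{m_s}(x)$ and $y_u$ (both outside $H$), so transversality is arranged at the surgery stage and survives the limit by the stability of transverse intersections. Second, ``$p$ will remain a saddle fixed point'' is not quite right: after perturbation there is a saddle $\eta(g)$ near $p$, not at $p$, and the paper finishes by conjugating with a small automorphism close to the identity to move $\eta(g)$ back to $p$.
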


\begin{proof}
Let $V$ be a neighbourhood of $f$ in $\Aut_\omega X$.  Find a holomorphically convex compact subset $H$ of $X$ such that if $g\in\Aut_\omega X$ is close enough to $f$ on $H$, then $g\in V$.  We may assume that $H$ is so large that its interior contains the orbit of $p$ and, by \cite[Lemma 1]{AL2025}, that if $g$ is close enough to $f$ on $H$, then $g$ has a unique saddle periodic point $\eta(g)$ near $p=\eta(f)$, whose orbit is contained in the interior of $H$.  

By Proposition \ref{p:expelling}, there are $x\in U$ and $g$ as above, such that neither orbit $({g}^n(x))_{n\geq 0}$ nor $({g}^{-n}(x))_{n\geq  0}$ is contained in $H$.  Let $m_s\geq 0$ be the smallest integer such that ${g}^{m_s}(x)\in X\setminus H$, and let $m_u\geq 0$ be the smallest integer such that ${g}^{-m_u}(x)\in X\setminus H$.  The stable manifold $W^s_{g}(\eta(g))$ of $\eta(g)$ is not contained in $H$, so there is $\tilde y_s\in W^s_{g}(\eta(g))\setminus H$.  Let $y_s$ be the point in the forward $g$-orbit of $\tilde y_s$ that lies in $X\setminus H$ with $g^n(y_s)\in H$ for all $n\geq 1$.  Note that $y_s$  belongs to the stable manifold of a point $p_s$ in the orbit of $\eta(g)$.  Analogously, there is $\tilde y_u\in W^u_{g}(\eta(g))\setminus H$, and we let $y_u$ be the point in the backward $g$-orbit of $\tilde y_s$ that lies in $X\setminus H$ with $g^{-n}(y_u)\in H$ for all $n\geq 1$.  Let $p_u$ be the point in the orbit of $\eta(g)$ whose unstable manifold contains $y_u$.

Let $W$ be a Runge neighbourhood of $H$ not containing any of the points
$g^{m_s}(x)$, $g^{-m_u}(x)$, $y_s$, $y_u$.  We may assume that the local stable and unstable manifolds $\Gamma^s_{g}(\eta({g}),r)$ and  $\Gamma^u_{g}(\eta({g}),r)$ are contained in a coordinate ball centred at $\eta({g})$, with $r>0$ given by \cite[Lemma 2]{AL2025}.  By enlarging $H$ if necessary, we may assume that $H$ contains the polydisc $\overline{\Delta^n(0,r)}$ in its interior.  Find $n_s\geq 1$ such that ${g}^{n_s}(y_s)\in \Gamma^s_{g}(\eta({g}),r)$ and $n_u\geq 1$ such that $g^{-n_u}(y_u)\in \Gamma^u_{g}(\eta({g}),r)$.  

Let $V_s$ and $V_u$ be coordinate balls centred at ${g}^{m_s}(x)$ and $y_u$, respectively, and let 
 \[ \varphi : [0,1]\times V_s\to X,\quad \psi : [0,1]\times V_u\to X \]
 be $C^1$ isotopies such that for all $t\in [0,1]$,
\begin{itemize}
\item $\varphi_t : V_s\to X$ and $\psi_t : V_u\to X$ are holomorphic, injective, and volume-preserving,
\item $\varphi_t(V_s)$, $\psi_t(V_u)$, and $W$ are mutually disjoint, 
\item $W\cup \varphi_t(V_s)\cup \psi_t(V_u)$ is Runge,
\item $\varphi_0$ is the inclusion of $V_s$ into $X$ and  $\psi_0$ is the inclusion of $V_u$ into $X$,
\item $\varphi_1(g^{m_s}(x))=y_s$ and $\psi_1(y_u)=g^{-m_u}(x)$,
\item $d_{y_u}  \varphi_1\circ g^{m_s+m_u} \circ \psi_1(T_{y_u}W_g^u(p_u))$ is transverse to $T_{y_s}W_{g}^s(p_s)$.
\end{itemize}
By the conservative Anders\'en--Lempert theorem \cite{FR1993, KK2011}, there is a sequence $(\Phi_j)$ in $\Aut_\omega X$ such that $\Phi_j\to \id$ on $W$, $\Phi_j\to \varphi_1$ on $V_s$, and $\Phi_j\to \psi_1$ on $V_u$, uniformly on compact subsets.  Let $ g_j= g\circ \Phi_j$.  (We note that the cohomological hypothesis in the conservative Anders\'en--Lempert theorem is satisfied on $V_s$ and $V_u$ because they are balls, but is irrelevant on $W$ because we seek to approximate the identity there.)

Let $\gamma:D\to \Gamma^u_g(\eta(g),r)$ be a holomorphic graph parametrisation of the local unstable manifold  $\Gamma^u_g(\eta(g),r)$ near $g^{-n_u}(y_u)$, defined on a small polydisc $D$. If $D$ is small enough, then $(g^{n_u}\circ \gamma)(D)$ is a holomorphically embedded piece of the unstable manifold $W^u_g(p_u)$ containing $y_u$ and contained in the neighbourhood $V_u$, and
\[ (\varphi_1\circ g^{m_s+m_u}\circ\psi_1\circ g^{n_u}\circ \gamma)(D)\] 
is an embedded complex submanifold which intersects the stable manifold $W^s_g(p_s)$ transversally at $y_s$.  Also,
 \[ (g^{n_s}\circ\varphi_1\circ g^{m_s+m_u}\circ\psi_1\circ g^{n_u}\circ \gamma)(D) \]
is an embedded complex submanifold intersecting the local stable manifold $\Gamma^s_g(\eta(g),r)$ transversally at $g^{n_s}(y_s)$.

By \cite[Lemma 2]{AL2025}, there is a  holomorphic graph parametrisation $\gamma_j:D\to \Gamma^u_{ g_j}(\eta(g_j),r)$ that converges uniformly to $\gamma$ as $j\to\infty$.  It follows that the holomorphic map $ g_j^{n_s+m_s+m_u+n_u}\circ \gamma_j$ converges uniformly to $g^{n_s}\circ\varphi_1\circ g^{m_u+m_s}\circ\psi_1\circ g^{n_u}\circ \gamma$, so if $j$ is large enough, the embedded complex submanifold $ (g_j^{n_s+m_s+m_u+n_u}\circ \gamma_j)(D)$ intersects the local stable manifold $\Gamma^s_{g_j}(\eta(g_j),r)$ transversally in a homoclinic point $q$  for $g_j$.  The point $g_j^{-n_s-m_s}(q)$ is also a homoclinic point for $g_j$ and is contained in $U$.  

This proves that every neighbourhood of $f$ in $\Aut_\omega X$ contains an automorphism with a saddle periodic point in any given neighbourhood of $p$ with a transverse homoclinic point in $U$.  Conjugation by a small perturbation of the identity completes the proof of Proposition \ref{t:obtain-homoclinic}.
\end{proof}

\begin{remark}
The proof of the theorem is easily adapted to show that if $p$ and $q$ are saddle periodic points of $f$, then every neighbourhood of $f$ contains an automorphism $g$ with $p$ and $q$ as saddle periodic points such that $U$ contains a transverse heteroclinic point in $W_g^s(p)\cap W_g^u(q)$.
\end{remark}

The following proof is modelled on Xia's proof of \cite[Theorem 2]{Xia1996}.

\begin{proof}[Proof of Theorem \ref{main2}]
Exhaust $X$ by compact sets $K_1\subset K_2\subset\cdots$ and let $\mathscr H_m$ be the set of $f\in\Aut_\omega X$ such that every periodic point of $f$ in $K_m$ of minimal period at most $m$ is hyperbolic, that is, a saddle.  Clearly, $\mathscr H_m$ is open.  For each $f\in\mathscr H_m$, the number of periodic points of $f$ in $K_m$ of minimal period at most $m$ is finite.  Call them $p_1,\ldots,p_k$. 

Choose a distance function on $X$ inducing the topology of $X$.  For each $m,j\geq 1$, let $(U_\ell)$ be a finite cover of $K_m$ by nonempty open sets of diameter less than $1/j$.  Let $\mathscr H_m^j$ be the subset of $\mathscr H_m$ of automorphisms such that if $p_1,\ldots,p_k$ are as above, then there is a transverse homoclinic point for each of $p_1,\ldots, p_k$ in $U_\ell$ for each $\ell$.  Clearly, $\mathscr H_m^j$ is open, and by Proposition \ref{t:obtain-homoclinic} applied several times, dense.  Then 
\[ \mathscr H = \bigcap_{m=1}^\infty \bigcap_{j=1}^\infty \mathscr H_m^j \]
is a dense $G_\delta$ subset of $\Aut_\omega X$ and for every $f\in \mathscr H$ and every periodic point $p$ of $f$, the transverse homoclinic points of $p$ are dense in $X$.
\end{proof}

\section{Proof of Proposition \ref{main3}} 
\label{sec:proof3}

\noindent
We consider a 2-dimensional Stein manifold $X$ with the volume density property with respect to a holomorphic volume form $\omega$.  We will use the automorphism $h:(z_1, z_2)\mapsto (z_2+z_1^d, -z_1)$ of $\C^2$, which preserves $\eta=dz_1\wedge dz_2$, with $d\geq 2$ fixed.  The higher-dimensional case may be handled in the same way, using the automorphism 
\[ (z_1, z_2, z_3,\ldots, z_n)\mapsto (z_2+z_1^d, -z_1, z_3,\ldots, z_n) \]
of $\C^n$.  It is easily seen that $h$ is H\'enon-like of degree $d$ on the bidisc $D=\{(z_1, z_2)\in\C^2:\lvert z_1\rvert, \lvert z_2\rvert <3\}$ in the sense of Dujardin \cite[Definition 2.1]{Dujardin2004}, so its entropy is $\log d$ \cite[Theorem 3.1]{Dujardin2004}.  This is the topological entropy of $h$ on the maximal completely $h$-invariant compact subset $\bigcap \limits_{k\in\mathbb Z} h^k(D)$ of $D$.  Any holomorphic injection $D\to\C^2$ sufficiently close to $h\vert_D$ is also H\'enon-like with the same degree and entropy.  It suffices to show that the open set of automorphisms $f\in \Aut_\omega X$ that are H\'enon-like of degree $d$ in some bidisc is dense.  Let $g\in\Aut_\omega X$ and let $U\subset X$ be open, relatively compact, and Runge.  We will show how to approximate $g$ on $U$ by such an automorphism $f$.

Let $W_1$ be the bidisc $s_1D$ in $\C^2$, where $s_1>0$ is small enough that $W$ admits a volume-preserving biholomorphism $\psi$, obtained by a holomorphic version of Moser's trick, onto a domain $V_1$ disjoint from $U$ and from $g(U)$, such that $U\cup V_1$ is Runge.  For $0<s_2<s_1$, let $W_2=s_2D$ and $V_2=\psi(W_2)$.  For $s_2$ small enough, there is a $C^1$ isotopy of holomorphic injections $\Psi_t: V_2\to X$, $t\in [0,1]$, preserving $\omega$, such that 
\begin{itemize}
\item $\Psi_0=g^{-1}|_{V_2}$,
\item $U$ and $\Psi_t(V_2)$ are disjoint and their union is Runge for all $t\in [0,1]$,
\item $\Psi_1(\psi(0))=\psi(0)$,
\item $\Psi_1(V_2)\subset V_1.$
\end{itemize}

For $a>0$, consider the scaling $h_a=(\frac{1}{a}\,\id)\circ h\circ (a\,\id)\in \Aut_\eta\C^2$ of $h$, mapping $(z_1, z_2)$ to $(z_2+a^{d-1}z_1^d, -z_1)$, and note that for $a$ large enough, $W_3=\frac 1 a D$ and $h_a(W_3)=\frac 1 a h(D)$ are both contained in $W_2$.  Let $V_3=\psi(W_3)$ and $\tilde h=\psi\circ h_a\circ\psi^{-1}: V_3\to V_2$.  We claim that the map $g^{-1}\circ \tilde h$  and the inclusion $V_3\hookrightarrow X$ can be joined by a piecewise $C^1$ isotopy of holomorphic injections preserving $\omega$, such that the images of $V_3$ along the isotopy are disjoint from $U$ and their unions with $U$ are Runge.  We prove this in two steps.  First, note that 
$(\Psi_t\circ \tilde h)_{t\in [0,1]}$ is a $C^1$ isotopy of holomorphic injections preserving $\omega$, joining $g^{-1}\circ \tilde h$ to the holomorphic injection
$\tilde k=\Psi_1\circ \tilde h\colon V_3\to V_1$, in such a way  that the images of $V_3$ along the isotopy are disjoint from $U$ and their unions with $U$ are Runge.

Next, the map $\tilde k$ can be joined to the inclusion $V_3\hookrightarrow X$ as follows.  Let $k=\psi^{-1}\circ\tilde k\circ \psi: W_3\to W_1$ and note that $k$ fixes the origin.  Let $L: \C^2\to \C^2$ be the derivative of $k$ at the origin.  For $t\in (0,1]$, let $\Phi_t(z)= k(tz)/t$ and let $\Phi_0=L$.  This defines a $C^1$ isotopy of holomorphic injections from $W_3$ to $\C^2$ preserving the volume form $\eta$.  By the Schwarz lemma, $\Phi_t(W_3)\subset W_1$ for all $t\in [0,1]$.  It is easy to see that $L$ can be joined to the identity on $W_3$ by a $C^1$ isotopy of linear maps $\Phi_t: W_3\to W_1$, $t\in [1,2]$, preserving $\eta$.  Then $(\psi\circ \Phi_t\circ \psi^{-1})_{t\in [0,2]}$ is a piecewise $C^1$ isotopy of holomorphic injections preserving $\omega$ and joining $\tilde k$ to the inclusion, in such a way that the images of $V_3$ along the isotopy are disjoint from $U$ and their unions with $U$ are Runge.

Now applying the conservative Anders\'en--Lempert theorem to the isotopy that joins $g^{-1}\circ \tilde h$ to the inclusion on $V_3$ and keeps the inclusion constant on $U$, we obtain automorphisms $\phi\in\Aut_\omega X$ such that $f=g\circ\phi$ approximates $g$ arbitrarily closely on $U$ and $\tilde h$ on $V_3$, so if the approximation is close enough, $f$ is H\'enon-like of degree $d$ on $V_3$ and the proof is complete.

\section{Proof of Theorem \ref{main4}} 
\label{sec:proof4}

\noindent
The proof of the first statement is a straightforward adaptation of the proof of the analogous result for automorphisms of a Stein manifold with the density property \cite[Theorem 2]{AL2025}, which in turn was a modification of the proof of the analogous result for endomorphisms of an Oka--Stein manifold \cite[Theorem 1(a)]{AL2022}.

The analogue of the second statement for automorphisms of a Stein manifold with the density property was proved as \cite[Theorem 2]{AL2025} by modifying Buzzard's proof for $\C^n$, $n\geq 2$ \cite{Buzzard1998}.  Buzzard did not consider the conservative case, but it can be handled by a similar modification.  The following paragraphs are taken from \cite{AL2025} with minor changes.

In Buzzard's notation, let $p_1$ and $p_2$ be saddle periodic points of a volume-preserving automorphism $F$ of a Stein manifold $X$ with the volume density property and $n=\dim X\geq 2$, and let $q_0 \in W^s_F(p_1)\cap W^u_F(p_2)$ be a homoclinic or heteroclinic point.  We need suitable replacements for Buzzard's families $\Psi_k$, $k\geq 1$, of automorphisms and family $\Psi$ of diffeomorphisms of $X$, defined in \cite[page 501]{Buzzard1998}.

The total $F$-orbit of $q_0$ accumulates only on the union of the cycles of $p_1$ and $p_2$, which is a finite set.   Let $K$ be the compact closure of the orbit.  Let $U$ be a Runge neighbourhood of $K$, consisting of finitely many coordinate balls with mutually disjoint closures, such that $q_0$ is the only point of $K$ in the ball $U_0$ that contains it.  By the holomorphic version of Moser's trick, the chart on $U_0$ can be taken to be volume-preserving with respect to the given volume form on $X$ and the standard volume form on $\C^n$.  Let $U_1 = U\setminus U_0$.  Let $V_0\subset U_0$ be a smaller ball containing $q_0$ and let $B$ be a closed ball in $\C^n$, centred at the origin, so that $\overline V_0+z\subset U_0$ for all $z\in B$ with respect to the coordinates in $U_0$.  Write $V=V_0\cup U_1$.

Define $\Phi:V\times B\to X$ by the formula $(x,z)\mapsto x+z$ on $V_0\times B$ and by the formula $(x,z)\mapsto x$ on $U_1\times B$.  Extend $\Phi$ to a smooth family $\Psi:X\times B\to X$ of volume-preserving diffeomorphisms.  Use the parametric Anders\'en--Lempert theorem \cite[Theorem 1.1]{Forstneric1994}\footnote{The theorem is stated for $\C^n$, $n\geq 2$, but holds more generally for Stein manifolds with the density property.  The Runge domains in the theorem should then be taken to be Stein (as they are here), but they need not be connected.  Just like the ordinary Anders\'en--Lempert theorem, the theorem may be adapted to the conservative case.}, adapted to the conservative case, to approximate $\Phi$ locally uniformly on $V\times B$ by a sequence of smooth families $\Psi_k:X\times B\to X$ of volume-preserving automorphisms of~$X$.  

These families have the properties needed for Buzzard's proof.  The remainder of the proof consists of local arguments and transversality and genericity arguments that hold in our general conservative setting.

\section{Analogous results in the non-conservative case} 
\label{sec:analogues}

\noindent
Proposition \ref{t:obtain-homoclinic} and Theorem \ref{main2} have non-conservative analogues that strengthen \cite[Theorem 8]{AL2025} and  \cite[Corollary 7]{AL2025}, respectively.  We omit the proofs, as they are very similar to the proofs given above.  For the notation used below, see \cite{AL2025}.

\begin{proposition}   \label{p:non-conservative-1}
Let $X$ be a Stein manifold with the density property.  Let $p$ be a saddle periodic point of $f\in\Aut\,X$.  Let $U$ be an open set intersecting $\tam(f)\setminus(\rne(f)\cup\rne(f^{-1}))$.  Then every neighbourhood of $f$ in $\Aut\, X$ contains an automorphism with $p$ as a saddle periodic point with a transverse homoclinic point in $U$.
\end{proposition}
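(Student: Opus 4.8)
The plan is to follow the proof of Proposition \ref{t:obtain-homoclinic} essentially verbatim, making three changes. Everywhere the conservative Anders\'en--Lempert theorem \cite{FR1993, KK2011} is replaced by the ordinary Anders\'en--Lempert theorem \cite[Chapter 4]{Forstneric2017} for Stein manifolds with the density property, and correspondingly the isotopies $\varphi_t$, $\psi_t$ and the automorphisms $\Phi_j$ are no longer required to preserve a volume form, so the remark about the cohomological hypothesis becomes vacuous. The two auxiliary facts used about persistence of the saddle point and about holomorphic graph parametrisations of local stable and unstable manifolds, \cite[Lemma 1]{AL2025} and \cite[Lemma 2]{AL2025}, are already available in the generality needed and are used unchanged. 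The only genuinely new point is that the input which Proposition \ref{p:expelling} supplies in the conservative argument --- a point $x$ in the target open set and an automorphism $g$ close to $f$ such that neither the forward nor the backward $g$-orbit of $x$ stays in the controlling compact set --- must now be supplied directly by the hypothesis that $U$ meets $\tam(f)\setminus(\rne(f)\cup\rne(f^{-1}))$.

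Concretely, I would begin as in the proof of Proposition \ref{t:obtain-homoclinic}: given a neighbourhood $V$ of $f$ in $\Aut X$, choose a holomorphically convex compact set $H\subset X$ so large that closeness to $f$ on $H$ forces membership in $V$, that the interior of $H$ contains the orbit of $p$, and that \cite[Lemma 1]{AL2025} applies, yielding for every $g$ close enough to $f$ on $H$ a unique nearby saddle periodic point $\eta(g)$ whose orbit lies in the interior of $H$. Next, pick a point $x\in U\cap\bigl(\tam(f)\setminus(\rne(f)\cup\rne(f^{-1}))\bigr)$, which is nonempty by hypothesis. Since $x\notin\rne(f)$ there are automorphisms arbitrarily close to $f$ for which the forward orbit of $x$ escapes $H$, and since $x\notin\rne(f^{-1})$ the same holds for the backward orbit; combining these using the properties of $\rne$ established in \cite{AL2025} yields a single $g$ arbitrarily close to $f$ for which both the forward and the backward $g$-orbit of $x$ leave $H$, and membership of $x$ in $\tam(f)$ ensures that the resulting escaping orbit segments form a tame configuration, so that the balls and isotopies of the next step can be chosen as required.

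From here the argument is a transcription of Section \ref{sec:proof2}: introduce the first-exit integers $m_s$, $m_u$ as in the proof of Proposition \ref{t:obtain-homoclinic}; choose points $y_s\in W^s_g(p_s)\setminus H$ and $y_u\in W^u_g(p_u)\setminus H$, on the stable and unstable manifolds of points $p_s$, $p_u$ in the orbit of $\eta(g)$, exactly as there; pick a Runge neighbourhood $W$ of $H$ missing $g^{m_s}(x)$, $g^{-m_u}(x)$, $y_s$, $y_u$, and integers $n_s$, $n_u$ carrying $y_s$, $y_u$ into the local stable and unstable manifolds $\Gamma^s_g(\eta(g),r)$, $\Gamma^u_g(\eta(g),r)$; build $C^1$ isotopies $\varphi_t\colon V_s\to X$ and $\psi_t\colon V_u\to X$ of holomorphic injections with the stated disjointness, Runge, endpoint, and derivative-transversality conditions; apply the Anders\'en--Lempert theorem to obtain $\Phi_j\to\id$ on $W$, $\Phi_j\to\varphi_1$ on $V_s$, $\Phi_j\to\psi_1$ on $V_u$, and set $g_j=g\circ\Phi_j$; propagate a holomorphic graph parametrisation of the local unstable manifold through the chain $g^{n_u}$, $\psi_1$, $g^{m_s+m_u}$, $\varphi_1$, $g^{n_s}$, use \cite[Lemma 2]{AL2025} to obtain converging graph parametrisations for $g_j$, and conclude that for large $j$ the automorphism $g_j$ has a transverse homoclinic point $q$ near $g^{n_s}(y_s)$, so that $g_j^{-n_s-m_s}(q)\in U$ is a transverse homoclinic point of $\eta(g_j)$. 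A final conjugation by a small perturbation of the identity moves $\eta(g_j)$ back to $p$ while keeping the homoclinic point in $U$, completing the proof.

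I expect the one real obstacle to be the step in the second paragraph: extracting from the single hypothesis $x\in\tam(f)\setminus(\rne(f)\cup\rne(f^{-1}))$ a single automorphism $g$ arbitrarily close to $f$ for which \emph{both} orbits of $x$ leave $H$, together with the tameness of the escaping configuration needed to run the isotopy construction. In the conservative case this is handled for free by the dense $G_\delta$ of doubly expelling automorphisms produced by Proposition \ref{p:expelling}, which has no counterpart for a fixed $f$; here one must instead unwind the definitions of $\rne$ and $\tam$ from \cite{AL2025} and argue by a careful iterated perturbation with controlled supports. Everything else is routine.
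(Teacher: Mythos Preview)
Your proposal is correct and matches the paper's approach: the paper explicitly omits the proof, stating only that it is ``very similar to the proofs given above,'' and your plan to transcribe the proof of Proposition~\ref{t:obtain-homoclinic} with the ordinary Anders\'en--Lempert theorem in place of the conservative one, and with the hypothesis $x\in\tam(f)\setminus(\rne(f)\cup\rne(f^{-1}))$ replacing the appeal to Proposition~\ref{p:expelling}, is exactly what is intended. Your identification of the one genuine subtlety---producing a single $g$ close to $f$ for which both the forward and backward orbit of $x$ leave $H$, handled by a two-step perturbation using that the escape condition involves only finitely many iterates---is accurate, and the role of $\tam$ in guaranteeing the Runge and disjointness conditions for the isotopies is correctly understood.
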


\begin{theorem}  \label{t:non-conservative-2}
Let $X$ be a Stein manifold with the density property.  For a generic automorphism $f$ of $X$, if $p$ is a saddle periodic point of $f$, then the transverse homoclinic points of $p$ are dense in $J_f^*$.
\end{theorem}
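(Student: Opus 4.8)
The plan is to reduce Theorem~\ref{t:non-conservative-2} to the non-conservative analogue of Proposition~\ref{t:obtain-homoclinic}, namely Proposition~\ref{p:non-conservative-1}, by running the same Baire-category argument used in the proof of Theorem~\ref{main2}. First I would fix an exhaustion of $X$ by compact sets $K_1\subset K_2\subset\cdots$ and a distance function inducing the topology. For each $m\geq 1$, let $\mathscr H_m$ be the set of $f\in\Aut\,X$ such that every periodic point of $f$ in $K_m$ of minimal period at most $m$ is hyperbolic; by the same stability-under-perturbation reasoning as before, $\mathscr H_m$ is open, and for $f\in\mathscr H_m$ there are only finitely many such periodic points $p_1,\ldots,p_k$, which depend continuously on $f$ on a neighbourhood. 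The key subtlety, absent in the conservative case, is that we want homoclinic points to be dense not in all of $X$ but in the Julia-type set $J_f^*$; accordingly the open sets $U_\ell$ must be chosen to probe $J_f^*$, and Proposition~\ref{p:non-conservative-1} only delivers a transverse homoclinic point when $U$ meets $\tam(f)\setminus(\rne(f)\cup\rne(f^{-1}))$. So I would first recall from \cite{AL2025} (or re-derive) that for a generic $f$ the set $\tam(f)\setminus(\rne(f)\cup\rne(f^{-1}))$ is dense in $J_f^*$ — this is the non-conservative counterpart of the expelling statement, Proposition~\ref{p:expelling}, and is exactly what makes the perturbation in Proposition~\ref{p:non-conservative-1} applicable at points of $J_f^*$.

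Next I would set up the countable family of dense open sets. Choosing a countable basis $(U_n)$ for the topology of $X$, for each triple $(m,n)$ with $U_n\cap J_f^*$-type conditions in mind, I would define $\mathscr H_m^n$ to be the set of $f\in\mathscr H_m$ such that, \emph{whenever} $U_n$ meets $J_f^*$, each of the finitely many periodic points $p_1,\ldots,p_k$ of $f$ in $K_m$ of minimal period at most $m$ has a transverse homoclinic point in $U_n$. One must check $\mathscr H_m^n$ is open: transversality of an intersection of (pieces of) stable and unstable manifolds is an open condition, the local stable/unstable manifolds vary continuously with the automorphism (as in \cite[Lemma 2]{AL2025}), and the hypothesis ``$U_n$ meets $J_f^*$'' behaves well under perturbation in the relevant direction — or one circumvents the semicontinuity issue of $J_f^*$ by phrasing the condition as an implication and noting the conclusion is the only part needing openness. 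Density of $\mathscr H_m^n$ is then precisely Proposition~\ref{p:non-conservative-1} applied successively to $p_1,\ldots,p_k$, using that $U_n\cap\big(\tam(f)\setminus(\rne(f)\cup\rne(f^{-1}))\big)\neq\varnothing$ for generic $f$ whenever $U_n\cap J_f^*\neq\varnothing$. Intersecting over all $m,n$ and over the generic set on which $\tam\setminus(\rne\cup\rne)$ is dense in $J^*$ yields a dense $G_\delta$ set $\mathscr H$; for $f\in\mathscr H$ and any periodic point $p$, the transverse homoclinic points of $p$ are dense in $J_f^*$.

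The main obstacle I anticipate is handling the dependence of $J_f^*$ on $f$. Unlike $X$ itself in the conservative setting, $J_f^*$ is a moving target that need not vary continuously, so some care is required to ensure the sets $\mathscr H_m^n$ are genuinely open and that intersecting countably many of them captures density in $J_f^*$ for \emph{every} $f$ in the resulting residual set. I expect this is resolved by (i) intersecting with the residual set of $f$ for which $\tam(f)\setminus(\rne(f)\cup\rne(f^{-1}))$ is dense in $J_f^*$ — established in \cite{AL2025} — and (ii) formulating each $\mathscr H_m^n$ conditionally, so that only the transversality conclusion, which is manifestly open, carries the openness burden. A secondary point is the finitely-many-applications bookkeeping in the density step: Proposition~\ref{p:non-conservative-1} is applied to one periodic point at a time, and one must verify that perturbing to create a transverse homoclinic point for $p_{i+1}$ does not destroy the ones already arranged for $p_1,\ldots,p_i$ — but this follows because each perturbation is supported away from the previously used orbits and transversality is open, exactly as in the proof of Theorem~\ref{main2}.
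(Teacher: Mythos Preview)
Your proposal is on exactly the track the paper indicates: the paper omits the proof, saying only that it is ``very similar'' to that of Theorem~\ref{main2} with Proposition~\ref{p:non-conservative-1} in place of Proposition~\ref{t:obtain-homoclinic}, and this is precisely the Baire-category scheme you lay out, including the correct identification of the moving target $J_f^*$ as the one new subtlety.

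One small correction is worth flagging. Your claim that in the conditional formulation of $\mathscr H_m^n$ ``only the transversality conclusion carries the openness burden'' is not right as stated: for $\{f:A(f)\Rightarrow B(f)\}=\{f:\neg A(f)\}\cup\{f:B(f)\}$ to be open you also need $\{f:\neg A(f)\}$ to be open. In the present setting this does hold, but it is not free---it comes from the robustness built into the definitions of $\rne$ and $\tam$ in \cite{AL2025} (so that, roughly, $\{(f,x):x\in\rne(f)\}$ is open in $\Aut\,X\times X$), and that is what must be invoked rather than sidestepped. With this adjustment your outline matches the paper's intended argument.
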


In the same way that Proposition \ref{p:non-conservative-1} strengthens  \cite[Theorem 8]{AL2025}, we can improve on \cite[Theorem 4]{AL2025} as below.  The corollary then follows by an argument similar to the proof of Theorem \ref{main2}.  Again, we omit the proofs.

\begin{theorem}   \label{t:non-conservative-3}
Let $X$ be a Stein manifold with the density property.  Let $p$ be a saddle periodic point of $f\in\Aut\,X$.  Let $U$ be an open set intersecting $X\setminus\rne(f)$.  Then every neighbourhood of $f$ in $\Aut\, X$ contains an automorphism $g$ with $p$ as a saddle periodic point such that $W_g^s(p)$ intersects $U$.
\end{theorem}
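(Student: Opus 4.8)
The plan is to run the ``stable-manifold half'' of the proof of Proposition \ref{t:obtain-homoclinic}, where now there is no unstable leg and no transversality condition to arrange. As in that proof it suffices to produce, in every neighbourhood $V$ of $f$ in $\Aut X$, an automorphism with a saddle periodic point in any prescribed neighbourhood of $p$ whose stable manifold meets $U$; conjugating by an automorphism close to the identity that carries the orbit of that saddle periodic point onto the orbit of $p$ then yields the statement as written. Begin by fixing a holomorphically convex compact set $H\subset X$ so large that $g\in V$ whenever $g$ is close enough to $f$ on $H$, that $H$ contains in its interior the orbit of $p$ and the polydisc from \cite[Lemma 2]{AL2025}, and that (by \cite[Lemma 1]{AL2025}) every such $g$ has a unique saddle periodic point $\eta(g)$, of the same minimal period as $p$, near $p=\eta(f)$ and with orbit in the interior of $H$. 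Since $U$ meets $X\setminus\rne(f)$, the definition of the robustly non-expelling set yields such a $g$, a ball $B$ with $\overline B\subset U$, and a point $x\in B$ whose forward $g$-orbit is not contained in $H$; let $m\geq 0$ be least with $g^m(x)\notin H$. (Only the forward orbit is used, which is why the hypothesis here is $U\cap(X\setminus\rne(f))\neq\emptyset$ rather than the stronger hypothesis of Proposition \ref{p:non-conservative-1}.)

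Next, drag a piece of the stable manifold back towards $x$. Since $W^s_g(\eta(g))$, being biholomorphic to $\C^k$ with $k=\dim W^s\geq 1$, is not contained in the compact set $H$, pick $\tilde y_s\in W^s_g(\eta(g))\setminus H$ and let $y_s$ be the point of its forward $g$-orbit lying in $X\setminus H$ with $g^n(y_s)\in H$ for all $n\geq 1$; then $y_s\in W^s_g(p_s)$ for some $p_s$ in the orbit of $\eta(g)$. Choose a Runge neighbourhood $W$ of $H$ avoiding $g^m(x)$ and $y_s$, a coordinate ball $V_s$ centred at $g^m(x)$, and a $C^1$ isotopy $\varphi:[0,1]\times V_s\to X$ of holomorphic injections with $\varphi_0$ the inclusion, $\varphi_1(g^m(x))=y_s$, each $\varphi_t(V_s)$ disjoint from $W$, and each $W\cup\varphi_t(V_s)$ Runge. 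By the Anders\'en--Lempert theorem for Stein manifolds with the density property (\cite[Chapter 4]{Forstneric2017}) there are $\Phi_j\in\Aut X$ with $\Phi_j\to\id$ on $W$ and $\Phi_j\to\varphi_1$ on $V_s$, uniformly on compacts; put $g_j=g\circ\Phi_j$. Then $g_j\to g$ uniformly on $H$, so $g_j\in V$ and $\eta(g_j)$ is defined for all large $j$.

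The heart of the argument is a convergence statement. Let $n_s\geq 1$ satisfy $z_s:=g^{n_s}(y_s)\in\Gamma^s_g(\eta(g),r)$. Tracking the ball $B$ forward under $g_j$ — its first $m$ iterates essentially reproduce those of $g$ and stay where $\Phi_j\approx\id$, then $g_j^m(B)\subset V_s$ and $\Phi_j$ carries it near $y_s$, and the remaining iterates again take place where $\Phi_j\approx\id$ — one finds that the open set $g_j^{m+n_s}(B)$ contains, for $j$ large, any prescribed compact subset of an open neighbourhood of $z_s$. Meanwhile \cite[Lemma 2]{AL2025} (applied to $g^{-1}$) provides graph parametrisations of $\Gamma^s_{g_j}(\eta(g_j),r)$ converging to one of $\Gamma^s_g(\eta(g),r)$, hence a point $z_j\in\Gamma^s_{g_j}(\eta(g_j),r)$ with $z_j\to z_s$. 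So for $j$ large $z_j\in g_j^{m+n_s}(B)$, and $x_j:=g_j^{-(m+n_s)}(z_j)\in B\subset U$; since $z_j$ lies in the local stable manifold of $\eta(g_j)$, the point $x_j$ lies in $W^s_{g_j}(g_j^i(\eta(g_j)))$ for some $i$. Conjugating $g_j$ by an automorphism $\theta$ close to the identity that maps the $g_j$-orbit of $\eta(g_j)$ onto the $f$-orbit of $p$ with $g_j^i(\eta(g_j))\mapsto p$ then produces $g'=\theta\circ g_j\circ\theta^{-1}\in V$ having $p$ as a saddle periodic point, with $\theta(x_j)\in W^s_{g'}(p)\cap U$ once $\theta$ is close enough to the identity and $B$ was chosen small.

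I expect the main obstacle to be conceptual rather than computational: one cannot simply iterate $x$ forward and hope its orbit lands on $W^s_{g_j}(\eta(g_j))$, since that manifold has positive codimension and landing on it is not an open condition. The remedy is to move the stable manifold backwards instead — equivalently, to push the whole ball $B\subset U$ forward — and to couple this with persistence of the \emph{local} stable manifold from \cite[Lemma 2]{AL2025}, which puts an honest point $z_j$ of $\Gamma^s_{g_j}(\eta(g_j),r)$ inside the open set $g_j^{m+n_s}(B)$. The only other delicate point is the closing conjugation, which must not push the intersection point out of $U$; this forces us to work from the outset with a ball $B$ well inside $U$ rather than with a single prescribed point, and to keep $\theta$ close to the identity.
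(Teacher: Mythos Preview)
Your approach is exactly what the paper has in mind (the proof is omitted there, described as adapting \cite[Theorem~4]{AL2025} just as Proposition~\ref{p:non-conservative-1} adapts \cite[Theorem~8]{AL2025}): drop the unstable-manifold leg of Proposition~\ref{t:obtain-homoclinic}, replace the appeal to Proposition~\ref{p:expelling} by the definition of $\rne(f)$, use the Anders\'en--Lempert theorem for the density property, and close with persistence of the local stable manifold plus a small conjugation.  Pushing the whole ball $B\subset U$ forward so that $g_j^{m+n_s}(B)$ is an open set absorbing the drifting point $z_j$ is a clean way to make the intersection with the stable manifold robust, entirely in the spirit of the paper's argument.

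There is one slip in the closing conjugation when the minimal period $N$ of $p$ exceeds $1$.  An automorphism $\theta$ close to the identity must carry the nearby $g_j$-cycle \emph{pointwise} onto the $f$-cycle of $p$, so necessarily $\theta(\eta(g_j))=p$; your requirement $\theta(g_j^i(\eta(g_j)))=p$ then forces $i\equiv 0\pmod N$, which you have not arranged, and with the only available $\theta$ you obtain $\theta(x_j)\in W^s_{g'}\big((g')^i(p)\big)$ rather than $W^s_{g'}(p)$.  This is bookkeeping rather than a missing idea: since each leaf $W^s_g(g^k(\eta(g)))$ is biholomorphic to some $\C^{d_s}$ and hence meets $X\setminus H$, one can choose $y_s$ (and then $n_s$) so that $z_s$ sits on the local stable manifold of $g^{m+n_s}(\eta(g))$, after which $x_j\in W^s_{g_j}(\eta(g_j))$ and the small conjugation with $\theta(\eta(g_j))=p$ finishes the job.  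The paper's proof of Proposition~\ref{t:obtain-homoclinic} elides the same point.
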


\begin{corollary}  \label{c:non-conservative-4}
Let $X$ be a Stein manifold with the density property.  For a generic automorphism $f$ of $X$, if $p$ is a saddle periodic point of $f$, then $W_f^s(p)$ is dense in $J_f^+$ and $W_f^u(p)$ is dense in $J_f^-$.
\end{corollary}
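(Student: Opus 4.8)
The plan is to transcribe the proof of Theorem \ref{main2}, with Theorem \ref{t:non-conservative-3} in the role of Proposition \ref{t:obtain-homoclinic} and with the set $J_f^+$ (and, symmetrically, $J_f^-$) in the role of $X$. Fix a distance function inducing the topology of $X$, exhaust $X$ by compact sets $K_1\subseteq K_2\subseteq\cdots$, and, exactly as in the proof of Theorem \ref{main2}, let $\mathscr{H}_m\subseteq\Aut X$ be the open set of automorphisms every periodic point of which in $K_m$ of minimal period at most $m$ is a saddle; for $f\in\mathscr{H}_m$ there are finitely many such points $p_1,\dots,p_k$, each persisting as a saddle under sufficiently small perturbations of $f$. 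Here one uses the non-conservative Kupka--Smale statement \cite[Theorem 2]{AL2025} in place of Theorem \ref{main4}, so that each $\mathscr{H}_m$ is dense and every periodic point of a generic automorphism is a saddle.

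Next, for each $m,j\geq 1$ I would choose a finite cover $(U_\ell)$ of $K_m$ by nonempty open balls of diameter less than $1/j$, together with a shrinking $(U_\ell')$ with $\overline{U_\ell'}\subseteq U_\ell$ still covering $K_m$, and let $\mathscr{H}_m^j$ be the set of $f\in\mathscr{H}_m$ such that, with $p_1,\dots,p_k$ as above, for every $i$ and every $\ell$ one has $\overline{U_\ell'}\subseteq\rne(f)$ or $W_f^s(p_i)\cap U_\ell\neq\emptyset$, and symmetrically with $\rne(f^{-1})$ and $W_f^u(p_i)$ replacing $\rne(f)$ and $W_f^s(p_i)$. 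I would check that $\mathscr{H}_m^j$ is open: $\{g:\overline{U_\ell'}\subseteq\rne(g)\}$ is open by the lower semicontinuity of $g\mapsto\rne(g)$ together with compactness of $\overline{U_\ell'}$, while $\{g:W_g^s(p_i)\cap U_\ell\neq\emptyset\}$ is open because the saddle continuation and its local, hence global, stable and unstable manifolds vary continuously with $g$. Then I would verify density of $\mathscr{H}_m^j$ by applying Theorem \ref{t:non-conservative-3} (and its $f^{-1}$-counterpart) finitely many times: if $f\in\mathscr{H}_m$ and $\overline{U_\ell'}\not\subseteq\rne(f)$ for some pair $(i,\ell)$, then the open ball $U_\ell$ meets $X\setminus\rne(f)$, so Theorem \ref{t:non-conservative-3} gives an arbitrarily small perturbation fixing $p_i$, keeping it a saddle, and forcing $W^s(p_i)$ to meet $U_\ell$; each defining condition being open, it survives further small perturbations, so the pairs are handled one at a time while the automorphism stays in the open set $\mathscr{H}_m$.

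Then $\bigcap_m\bigcap_j\mathscr{H}_m^j$ is a dense $G_\delta$ subset of $\Aut X$ with the desired property. Indeed, fix $f$ in it, a saddle periodic point $p$ of minimal period $d$ with $p\in K_{m_0}$, and $x\in J_f^+$; choose $m\geq\max(d,m_0)$ with $x\in K_m$, so that $p$ is among $p_1,\dots,p_k$. For every $j$ the point $x$ lies in some $U_{\ell_0}'$; since $J_f^+\cap\rne(f)=\emptyset$ — on the open set $\rne(f)$ the forward iterates have locally bounded image under a proper embedding $X\hookrightarrow\C^N$, hence form a normal family — we get $x\notin\rne(f)$, so $\overline{U_{\ell_0}'}\not\subseteq\rne(f)$, so $W_f^s(p)\cap U_{\ell_0}\neq\emptyset$, which produces a point of $W_f^s(p)$ within $1/j$ of $x$. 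Letting $j\to\infty$ yields $x\in\overline{W_f^s(p)}$; together with the inclusion $W_f^s(p)\subseteq J_f^+$ from \cite{AL2025} this shows $W_f^s(p)$ is dense in $J_f^+$. Applying the same argument to $f^{-1}$ — which is generic exactly when $f$ is, with $W_{f^{-1}}^s(p)=W_f^u(p)$ and $J_{f^{-1}}^+=J_f^-$ — gives density of $W_f^u(p)$ in $J_f^-$.

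I expect the only real subtlety to be the handling of $\rne(f)$: the dichotomy defining $\mathscr{H}_m^j$ must be phrased so that it is at once open in $f$ (which dictates using $\overline{U_\ell'}$) and exploitable through Theorem \ref{t:non-conservative-3} (which needs an open set meeting $X\setminus\rne(f)$, dictating the passage back to $U_\ell$), and one must import from \cite{AL2025} the precise relation between $J_f^+$ and $\rne(f)$ — disjointness, and containment of the stable manifolds of saddle periodic points. Everything else is a routine adaptation of the proof of Theorem \ref{main2}.
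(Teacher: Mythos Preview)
Your adaptation is essentially the approach the paper has in mind --- mimic the proof of Theorem~\ref{main2} with Theorem~\ref{t:non-conservative-3} in place of Proposition~\ref{t:obtain-homoclinic} --- and the dichotomy you introduce (either $\overline{U_\ell'}\subseteq\rne(f)$ or $W_f^s(p_i)$ meets $U_\ell$), together with the shrinking $(U_\ell')$, is exactly the right way to accommodate the restriction in Theorem~\ref{t:non-conservative-3} that $U$ must meet $X\setminus\rne(f)$ while keeping $\mathscr{H}_m^j$ open.

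There is, however, one genuine slip.  You define $\mathscr{H}_m$ to consist of automorphisms all of whose periodic points in $K_m$ of period at most $m$ are \emph{saddles}, and claim density via the Kupka--Smale theorem of \cite{AL2025}, asserting that ``every periodic point of a generic automorphism is a saddle.''  In the conservative setting of Theorem~\ref{main2} this is correct, since a hyperbolic periodic point of a volume-preserving map is automatically a saddle.  In the non-conservative setting it is false: attracting and repelling periodic points are robust (think of $z\mapsto 2z$ on $\C^n$), so the set you have written down need not be dense.  The Kupka--Smale theorem only yields hyperbolicity.  The fix is routine: let $\mathscr{H}_m$ be the open dense set on which all such periodic points are \emph{hyperbolic}, let $p_1,\dots,p_k$ be only the saddle ones among them (their number and type are locally constant on $\mathscr{H}_m$), and impose the defining condition of $\mathscr{H}_m^j$ for these alone; Theorem~\ref{t:non-conservative-3} is in any case stated only for saddles.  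With this correction the rest of your argument goes through unchanged, and the conclusion --- density of $W_f^s(p)$ in $J_f^+$ for each saddle periodic point $p$ --- is exactly what the corollary asserts.
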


Finally, the proof of Proposition \ref{main3} is easily adapted to the non-conservative case to establish the following result.

\begin{proposition}   \label{p:non-conservative-4}
A generic automorphism of a Stein manifold with the density property has infinite entropy.
\end{proposition}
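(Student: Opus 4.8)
The plan is to follow the proof of Proposition \ref{main3} essentially verbatim, deleting every reference to volume preservation and replacing the conservative Anders\'en--Lempert theorem by its non-conservative counterpart for Stein manifolds with the density property. Fix $d\geq 2$. As in Proposition \ref{main3}, the automorphism $h\colon(z_1,z_2)\mapsto(z_2+z_1^d,-z_1)$ of $\C^2$ --- together with the analogous automorphism of $\C^n$ when $\dim X=n>2$ --- is H\'enon-like of degree $d$ in the sense of Dujardin \cite{Dujardin2004} on a fixed bidisc $D$, with topological entropy $\log d$ on the maximal completely invariant compact subset $\bigcap_{k\in\mathbb{Z}}h^{k}(D)$ of $D$, and every holomorphic injection of $D$ into $\C^2$ close enough to $h|_D$ has the same property. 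Being H\'enon-like of degree $d$ on some holomorphically embedded bidisc is an open condition, so the set $\mathscr E_d\subset\Aut\,X$ of automorphisms with this property is open. It therefore suffices to prove that each $\mathscr E_d$ is dense: then $\bigcap_{d\geq 2}\mathscr E_d$ is a dense $G_\delta$ subset of $\Aut\,X$, and every automorphism in it has entropy at least $\log d$ for every $d\geq 2$, hence infinite entropy.

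For density of $\mathscr E_d$, let $g\in\Aut\,X$ and let $U\subset X$ be open, relatively compact, and Runge; I would show how to approximate $g$ on $U$ by an element of $\mathscr E_d$. Since $\overline{U}\cup\overline{g(U)}$ is compact and $\dim X\geq 2$, fix a small bidisc $W_1=s_1D$ in $\C^2$ and a holomorphic injection $\psi\colon W_1\to X$ onto a domain $V_1$ disjoint from $U$ and from $g(U)$ with $U\cup V_1$ Runge. In contrast to Proposition \ref{main3}, no holomorphic Moser trick is needed here, because $\psi$ need not be volume-preserving: a suitably placed coordinate chart, moved into position using the density property if necessary, supplies $\psi$ directly. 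With $W_1$ and $\psi$ fixed, the entire remaining construction of Proposition \ref{main3} --- the nested bidiscs $W_3\subset W_2\subset W_1$; the scaled H\'enon maps $h_a$ chosen so that $W_3$ and $h_a(W_3)$ both lie in $W_2$; the map $\tilde h=\psi\circ h_a\circ\psi^{-1}\colon V_3\to V_2$; the isotopy $(\Psi_t\circ\tilde h)_{t\in[0,1]}$ pushing $g^{-1}\circ\tilde h$ to $\tilde k=\Psi_1\circ\tilde h$; and the Schwarz-lemma isotopy $(\psi\circ\Phi_t\circ\psi^{-1})_{t\in[0,2]}$ joining $\tilde k$ to the inclusion $V_3\hookrightarrow X$ --- carries over word for word, with ``holomorphic injection'' read everywhere in place of ``volume-preserving holomorphic injection'', and with all images along the isotopies staying disjoint from $U$ and having Runge unions with $U$ exactly as before.

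Finally, I would apply the Anders\'en--Lempert theorem for Stein manifolds with the density property \cite[Chapter 4]{Forstneric2017} to the isotopy that joins $g^{-1}\circ\tilde h$ to the inclusion on $V_3$ while keeping the inclusion fixed on $U$, obtaining automorphisms $\phi\in\Aut\,X$ such that $f=g\circ\phi$ approximates $g$ arbitrarily closely on $U$ and $\tilde h$ on $V_3$. If the approximation is close enough, $f$ is H\'enon-like of degree $d$ on $V_3$, so $f\in\mathscr E_d$; this proves density and completes the proof.

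I do not expect a genuine obstacle: every step of Proposition \ref{main3} that is specific to the conservative setting becomes vacuous here --- the holomorphic Moser trick used to make $\psi$ volume-preserving is unnecessary, and the cohomological caveat attached to applying the conservative Anders\'en--Lempert theorem on $V_3$ disappears entirely. The only point that requires real attention is the same one as in the conservative case, namely the bookkeeping that keeps the images of $V_3$ disjoint from $U$ and their unions with $U$ Runge along the isotopies; this is identical to the bookkeeping already carried out in the proof of Proposition \ref{main3}.
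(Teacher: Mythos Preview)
Your proposal is correct and follows precisely the approach the paper takes: the paper simply remarks that the proof of Proposition~\ref{main3} is easily adapted to the non-conservative case, and you have spelled out exactly that adaptation, dropping the volume-preservation constraints (in particular the holomorphic Moser trick and the cohomological caveat) and invoking the ordinary Anders\'en--Lempert theorem for Stein manifolds with the density property in place of its conservative version.
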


\end{document}